\pgfplotsset{compat=1.15}
\newcommand{\setword}[2]{%
	\phantomsection
	#1\def\@currentlabel{\unexpanded{#1}}\label{#2}%
}
\definecolor{ccqqww}{rgb}{0.8,0.,0.4}
\definecolor{ffccww}{rgb}{1.,0.8,0.4}
\definecolor{qqwwzz}{rgb}{0.,0.4,0.6}
\definecolor{ttzzqq}{rgb}{0.2,0.7,0.4}
\theoremstyle{plain}
\newtheorem{theorem}{Theorem}[section]
\newtheorem{lemma}[theorem]{Lemma}
\newtheorem{proposition*}{Proposition}
\newtheorem{corollary}[theorem]{Corollary}
\theoremstyle{definition}
\newtheorem{example}{Example}
\theoremstyle{remark}
\newtheorem{remark}{Remark}
\numberwithin{equation}{section}
\DeclareMathOperator{\R}{\mathbb{R}}
\DeclareMathOperator{\C}{\mathbb{C}}
\renewcommand{\H}{\mathbb{H}}
\renewcommand{\S}{\mathbb{S}}
\renewcommand{\a}{\mathfrak{a}}
\renewcommand{\k}{\mathfrak{k}}
\DeclareMathOperator{\I}{\mathcal{I}}
\DeclareMathOperator{\Z}{\mathbb{Z}}
\DeclareMathOperator{\g}{\mathfrak{g}}
\DeclareMathOperator{\m}{\mathfrak{m}}
\DeclareMathOperator{\End}{\text{End}}
\DeclareMathOperator{\Lie}{\text{Lie}}
\DeclareMathOperator{\Rep}{\text{Rep}}
\DeclareMathOperator{\id}{\text{id}}
\DeclareMathOperator{\Ad}{\text{Ad}}
\DeclareMathOperator{\rank}{\text{rank}}
\DeclareMathOperator{\Iso}{\text{Isom}}
\DeclareMathOperator{\Isot}{\text{Isot}}
\DeclareMathOperator{\res}{\text{res}}
\DeclareMathOperator{\sym}{\text{sym}}
\DeclareMathOperator{\U}{\mathcal{U}}
\DeclareMathOperator{\llangle}{\langle\langle}
\DeclareMathOperator{\rrangle}{\rangle\rangle}
\title[HIDDEN SYMMETRIES AND THE GENERIC SPECTRAL SETTING OF GENERALIZED LAPLACIANS ON HOMOGENEOUS SPACES]{HIDDEN SYMMETRIES AND THE GENERIC SPECTRAL SETTING OF GENERALIZED LAPLACIANS ON HOMOGENEOUS SPACES}
\author[D. S. de Oliveira \and
M. A. M. Marrocos
]
{Diego S. de Oliveira \and
Marcus A. M. Marrocos
}
\address{Centro de Matem\'atica, Computa\c{c}\~ao e Cogni\c{c}\~ao, Universidade Federal do ABC\\
Av. dos Estados, 5001\\
09210-580 Santo Andr\'e, S\~ao Paulo\\
Brazil.}
\email{diego.sousa@ufabc.edu.br}
\address{Departamento de Matemática, ICE, Universidade Federal do Amazonas\\
Av. General Rodrigo Octávio Jordão Ramos, 6200, Campus Universitário, Coroado I,\\
Manaus, AM, 69080-005.}
\email{marcusmarrocos@ufam.edu.br}
\thanks{The first author was supported in part by grant, Coordenação de Aperfeiçoamento de Pessoal de Nível Superior (CAPES). The  second author acknowledge support from Fundação de Amparo à Pesquisa do Estado do Amazonas (FAPEAM)}
\keywords{Laplacian, representation theory, generic irreducibilty, lie groups, homogeneous spaces}
\subjclass[2020]{35J05, 22C05, 22E46, 53C35}
\begin{document}

\maketitle



\pagestyle{fancy} 
\fancyhf[LHE]{\scalebox{0.6}{DIEGO S. DE OLIVEIRA AND MARCUS A. M. MARROCOS}}
\fancyhf[RHE]{\thepage}
\fancyhf[LHO]{\thepage}
\fancyhf[RHO]{\scalebox{0.55}{HIDDEN SYMMETRIES AND THE GENERIC SPECTRAL SETTING OF GENERALIZED LAPLACIANS ON HOMOGENEOUS SPACES}}
\fancyhf[CHE]{}
\fancyhf[CHO]{}

\fancyhf[LFE]{ }
\fancyhf[RFE]{ }
\fancyhf[LFO]{ }
\fancyhf[RFO]{ }
\fancyhf[CFE]{ }
\fancyhf[CFO]{ }



\begin{abstract}
The purpose of this work is to establish the spectral setting of some generalized Laplace operators associated to a generic $G$-invariant metric on a compact homogeneous space $M=G/K$. We show that this generic spectral configuration depends on the $G$-isometries and on some certain hidden symmetries constructed in the adjacent structures of $M$ and of these operators.
\end{abstract}

\setcounter{tocdepth}{1}
\tableofcontents

\section{Introduction}

\,

\,

\textbf{Why should we study Laplacians and similar operators?}

\,

The spectral theory of operators such as the Laplacian, the Hodge-Laplacian and other similar related operators (for example, Hamiltonian operators and the Dirac operator) has many applications including classical and quantum physical systems, the diffusion of heat on a given surface, the propagation of a biological virus or a tumor or even other diseases, the evolution of neural nets, data processing, and so on \cite{afrouzi2007population, zachmanoglou1986introduction, ranjan2012discrete, de2014laplacian}. 
Specially in physics, the aforementioned operators are related to dynamics and physical phenomenons in many areas as quantum physics, Newtonian mechanics, general relativity, astronomy, wave mechanics, fluid mechanics, string theory, just to cite a few of them \cite{moretti2017spectral, simon2007spectral, mantoiu2016spectral,helffer2013spectral}. In pure mathematics, these operators can give us information about the geometry and the topology of a manifold, for example.

\,

\textbf{Why should we study homogeneous spaces?}

\,

The study of homogeneous spaces is closely related to an important branch of mathematics known as Lie theory, which is connected to many other mathematical subjects as algebraic geometry, differential geometry, operator algebra, partial differential equations, number theory and mathematical physics \cite{baklouti2018geometric}. 

Homogeneous spaces form an important class of examples in the theory of manifolds. A homogeneous space is a $G$-manifold where the lie group $G$ acts transitively on $M$. In this case, $M$ can be identified as a coset manifold in the form $G/K$, where the subgroup $K$ can be taken as the isotropy subgroup $G$ of any fixed base point $p \in M$. These spaces generalize the class of Lie groups, which are prototypes to study symmetry and other geometrical or even topological transformations in a vast collection of models. Their origin is due to Sophus Lie, who was interested in the connection of the group theory and the solutions of differential equations in dynamics. Parallel to this development, Felix Klein applied the Lie groups as groups of transformations on a geometrical object and discussed the fundamental question of what a geometry is from that point of view \cite{Arvanitoyeorgos2003}. The Lie groups also generalize the matrix algebraic theory with ramifications in many computer science subjects, for example, beyond their wide range of interdisciplinary applications.   

 Homogeneous space also generalize the class of the symmetric spaces, which are important geometrical spaces that describe prototype models in the universe and in differential geometry. Specially in general relativity, it is common to study the Lorentzian symmetric spaces, which include examples as the Minkowski, De Sitter and anti-de Sitter spaces \cite{cahen1972lorentzian}. Also, the symmetric spaces are applied in some topics related to algorithms and numerical analysis such as polar type matrix decompositions, splitting methods for computation of the matrix exponential, composition of self adjoint numerical integrators and time symmetric dynamical systems \cite{munthe2001application}. Finally, we can also mention their importance to the random matrix theory with ramifications to disordered systems, quantum transport problems, the study of condensed matter physics and quantum chromodynamics, showing their extensive field of research issues \cite{Magnea2002introduction}.

\,

\textbf{Why should we study the spectra of differential operators?}

\,

The spectra of Laplacians give information about the spaces on which these operators are constructed and their adjacent structures. In many physical applications, the eigenvalues measure significant data as state levels of energy in quantum mechanics, special frequencies in wave mechanics and important scalar quantities in other physical systems. In the theory of manifolds, they not only provide geometrical information and harmonic analysis, but they also give us topological properties in the study of homology or cohomology, which measures `holes' and obstructions in the manifold. They also provide some other invariants in algebraic topology.  

The spectrum of a given Laplace operator also measures the presence of geometrical symmetries in some sense, that is, the presence of isometries in the given manifold. This remark is due to the fact that such operator commutes with isometries by the pullback action and, conversely, each diffeomorphism commuting with it is actually a isometry \cite{Kobayashi1963,watson1973manifold}. For example, if the spectrum of the Laplace-Beltrami operator is simple, then there is no significant presence of geometrical symmetries on the given manifold, as proved by Uhlenbeck \cite{Uhlenbeck1976} for a generic metric on a compact and connected Riemannian manifold. Here, the word \emph{generic} stands for the idea of most of metrics in the topological sense of second category sets of Baire, that is, there is a \emph{residual subset (the complement of a meager subset)} in the total space of Riemannian metrics such that, for each metric in this subset, the spectrum is simple. These considerations turn the presence of symmetries on the given manifold, endowed with an arbitrary metric, into a special rare behavior.

The previous paragraph is closely related to Hamiltonian operators in a quantum physical system, where the generic setting of the eigenspaces reflects the fact that the effective Hamiltonian has no degeneracy. On the other hand, the presence of symmetries is related to the existence of degeneracies in the system. Also, there are symmetries related to some non obvious degeneracies which are not apparent in the original geometric structure of the system in a first glance. These symmetries are referred as \emph{latent hidden symmetries} in the context of the article \cite{rontgen2021latent}. Degeneracies induced by latent symmetries can be preserved even under a breaking of the original rotational symmetry or other perturbations in the original geometric structure. The presence of degeneracies from obvious and non obvious symmetries can also decides the integrability of a dynamical physical system \cite{cariglia2014hidden}.

The analogous of Uhlenbeck's work for the Hodge-Laplace is not obvious and its generic panorama demands some extra hypothesis and adaptations. We mention some advances on closed manifolds of dimensions $3$ and $5$ considered by Enciso and Peralta-Salas  \cite{enciso2012nondegeneracy} e Gier \cite{gier2014eigenvalue}.

In the Laplacian's spectral problem, when the given manifold $M$ has a prescribed group of isometries $G$ acting on it, the eigenspaces must be $G$-modules for each $G$-invariant metric. Thus, the minimum expected panorama for the spectrum is that each eigenspace is an irreducible $G$-module at least, and the multiplicity of the corresponding eigenvalue depends on the degree of this irreducible $G$-module. 
 Therefore, in the restricted space of $G$-invariant metrics on $M$, we can not expect the spectrum to be simple in the generic setting. Instead, we can ask if the spectrum is (real or complex) $G$-simple, that is, if the eigenspaces are irreducible (real or complex) $G$-modules for a generic $G$-invariant metric. The $G$-modules organize the geometric symmetries given by the isometries in $G$. If there exists non obvious (hidden) transformations leaving the spectrum invariant, in the generic setting, this means that the spectrum could not be $G$-simple for a generic $G$-invariant metric. In the context of Hamiltonians on physical systems, a non generic $G$-simple spectrum could mean that, in most of cases, there are degeneracies which are not induced by the prescribed symmetry geometric group $G$. Therefore, the spectral problem could have important non obvious symmetries playing a essential role in the organization of the eigenspaces in the generic panorama. Degeneracies can be classified as normal or accidental as long the $G$-simplicity property holds or not \cite{wigner2012group}.

 Collecting up all the symmetries given by $G$ and other possible non obvious symmetries leaving the spectrum invariant, for a generic $G$-invariant metric, we are able to find a more general set of generalized symmetries $\widetilde{G}$ describing the spectral problem more suitably. The symmetries of $\widetilde{G}$ which are outside of $G$ can be interpreted as hidden symmetries for the spectrum in the generic panorama.

The question about the generic $G$-simple spectrum of the real Laplace-Beltrami operator associated to a $G$-manifold $M$ was formulated as a question inside the problem number 42 in the Yau's list of open problems \cite{yau1993open}. It is worth to mention that Yau, who formulated the problem a few decades ago, already was aware of possible non obvious transformations outside of the prescribed group of isometries $G$ commuting with the Laplace-Beltrami operator and leaving the eigenspaces invariant. 

Some advances were given since the formulation of the problem. Zelditch \cite{zelditch1990} proved that $G$-Manifolds, where $G$ is a finite group of isometries whose degrees of its irreducible representations are majored by the dimension of $M$, has the generic property of the $G$-simple spectrum. Results in the same direction can also be found in \cite{Marrocos2019, cianci2024spectral}, where the authors, in addition to other results, show that the eigenspaces are irreducible representations of the prescribed symmetry group $G$ for a generic $G$-invariant metric. Schueth \cite{Schueth2017} proved the $G$-simplicity of the spectrum for a generic left-invariant on some compact Lie groups $G$ involving products of $SU(2)$ and torus. After this, Petrecca and Röser \cite{Petrecca2019} proved the $G$-simplicity of the spectrum for a compact rank one symmetric space $M=G/K$, and they also proved that the same result is not true for irreducible symmetric spaces of rank $\geq 2$.

The non generic $G$-simplicity of the spectrum for symmetric spaces of rank $\geq 2$ opens a route of investigation in the search of possible non obvious (hidden) symmetries explaining the non generic $G$-simplicity feature of the spectrum for these highly symmetric manifolds and, more generally, for arbitrary homogeneous spaces.

Another route of investigation is open in the same Yau's problem for generalized Laplacians and other similar operators. We can mention some adjacent results. For the Hodge-Laplacian, for example, Ikeda and Taniguchi  \cite{Ikeda1978} already had established an algebraic machinery, in 1978, to deal with the eigenspaces by using representation theory, although they had other particular interests. Recently, Semmelmann and Weingart \cite{semmelmann2019standard} considered the class of the standard Laplace operators, and Casarino, Ciatti and Martini \cite{casarino2022weighted} studied Grushin operators.  In some sense, the operators of this article have similar features to them, specially on the abstract algebraic level of the representation theory. We will also show that they generalizes the Laplace-Beltrami operator and, for some parameters, they also generalizes the Hodge-Laplacian and Casimir operators.

\,

\textbf{How do we find hidden symmetries?}

\,

First, we need to understand what the word \emph{symmetry} represents to our purposes, since it is a broad concept applied to different contexts. Usually, symmetry in geometry is refereed as the isometry notion of preserving a geometric structure. Symmetry also could be a transformation preserving an algebraic property or even a property of a function, a map or an operator of interest. 

In our context, a symmetry is a map or transformation defined on the given manifold or on its adjacent structures that is responsible to leave the  eigenspaces of the given operator of interest invariant. For a $G$-invariant metric, the group $G$ is commonly a natural set of symmetries of geometric nature for the eigenspaces. In this context, we can regard the term \emph{hidden symmetry} as any of these symmetries which are non obvious or non apparent at a first glance, living outside of $G$, and yet playing a essential role in the description of the eigenspaces for a generic $G$-invariant metric.

How do we decide if there are hidden symmetries in the generic setting of a given spectral problem and how do we find them? In order to motivate and illustrate this question it is worth to spend some words about the $SO(4)$ symmetry group of the hydrogen atom which is related to the Kepler's 3D problem and to the Laplacian's spectral problem \cite[Ch.9]{singer2006linearity}. In the Kepler's 3D problem, for example, we expect at a first glance the conservation of certain levels of energy described only by $SO(3)$-symmetries, since $SO(3)$ is the natural connected group of isometries in the 3D euclidean space $\R^3$. But it can be shown that there are some non obvious 3D movements conserving these energy levels which correspond to $SO(4)$-isometries for the sphere $\mathbb{S}^3$ under the stereographic projection. So, the transformations of these non obvious movements can be considered hidden symmetries for this physical phenomenon.

The search of hidden symmetries depends on the context and specific features of the problem \cite{rontgen2021latent,cariglia2014hidden,singer2006linearity, Kobayashi2017}. 

\,

\textbf{The main problem}

\,

Let $M = G/K$ a compact and connected homogeneous space, where $G$ acts isometrically and transitively on $M$. 

What is the spectral setting for the spectrum of a Laplacian, or an operator with similar features, associated to a generic $G$-invariant metric on $M$? Does it hold the $G$-simplicity property? If not, can we find a generalized set of symmetries $\widetilde{G}$ containing $G$ and hidden symmetries outside of $G$ which determine the generic panorama for the eigenspaces?

\,

\textbf{The generalized Laplace operators considered in this article}

\,

Let $g$ a $(G\times K)$-invariant metric on $G$ that corresponds to a $G$-invariant metric on a compact connected homogeneous space $M=G/K$, and $(\tau,U)$ a $K$-representation $U$ with $K$-action $\tau$. For a $g$-orthonormal basis $\{Y_j \}$, the quadratic element $\sum_j Y_j^2$ belongs to the universal envelopment algebra $\U(\g^{\C})$.

We want to construct operators $\Delta_{g,U^*}$, on the $G$-module
\[
C^\infty(G,K;U^*) := \{ f \in C^{\infty}(G,U^*) \; | \; \forall x \in G, \, \forall k\in K, f(xk)=\tau^*(k^{-1})f(x)   \} \, ,
\]
by using induced actions of the left or the right regular representation on elements in the form $\sum_j Y_j^2 \in \U(\g^{\C})$. We also assume that $U^*$ has a real form $U^*_{\R}$ and the complex operator $\Delta_{g,U^*}$ has a corresponding real version $\Delta_{g,U^*_{\R}}$.

More specifically, in Section \ref{sec:generalized-laplacians}, we define, for a compact and connected Lie group $G$ with left-invariant metric $g$, the operators
\[
\Delta_{g,U^*}:= -\sum\limits_{j} R_*(Y_j)^2 :C^\infty(G,U^*) \to C^\infty(G;U^*) \, ,
\] 
\[
\Delta_{g,U^*_{\R}}:= -\sum\limits_{j} R_*(Y_j)^2 :C^\infty(G,U^*_{\R}) \to C^\infty(G;U^*_{\R}) \, ,
\] 
where $R_*: \U(\g^{\C}) \to \End\left(C^\infty(G;U^*)\right)$ is the extension to the universal enveloping algebra $\U(\g^{\C})$ of the induced Lie algebra representation associated to the right regular representation $R: G \to GL\left( C^\infty(G;U^*) \right)$, which is merely taken as the derivative of $R$ at the identity element of the group $G$. 

Similarly, in Section \ref{sec:generalized-laplacians-normal-homogeneous-spaces}, we define, for a compact and connected homogeneous space $M=G/K$ with a normal $G$-invariant metric $g$, the operators

\[
\Delta_{g,U^*}:= -\sum\limits_{j} L_*(Y_j)^2 :C^\infty(G,K;U^*) \to C^\infty(G,K;U^*) \, ,
\] 
\[
\Delta_{g,U^*_{\R}}:= -\sum\limits_{j} L_*(Y_j)^2 :C^\infty(G,K;U^*_{\R}) \to C^\infty(G,K;U^*_{\R}) \, ,
\] 
where $L_*$ is induced by the left regular representation, in a analogous way to the last paragraph.

In both cases, when we take $U^* = \C$, then $\Delta_{g,U^*}$ will coincide with the complex version of the Laplace-Beltrami operator, up to identifications. Similarly, $\Delta_{g,U^*_{\R}}$ will be the corresponding real version one.

For normal homogeneous spaces, we can consider the splitting $\g = \k \oplus \m$ induced by the quotient $M=G/K$, with isotropy representation $\m$, and the complexified $p$-exterior representation $U^*= \wedge^p \m^{*\C}$. In this case, $\Delta_{g,U^*}$ will be a Casimir Operator which coincides with the Hodge-Laplacian of $M$ acting on differential complex $p$-forms (up to an identification). Similarly, $\Delta_{g,U^*_{\R}}$ will be the real version of this Hodge-Laplacian \cite{Ikeda1978,Tsonev2018}.

For the sake of simplicity, from now on in this introduction, we will discuss the results and main ideas only for the complex operator $\Delta_{g,U^*}$, since the analogous analysis for the real operator $\Delta_{g,U^*_{\R}}$ are essentially the same up to some adaptations.

In order to verify that $\Delta_{g,U^*}:C^\infty(G,K;U^*) \to C^\infty(G,K;U^*)$ is well defined, we need necessarily that $\Delta_{g,U^*} f \in C^\infty(G,K;U^*)$ for any $f \in C^\infty(G,K;U^*)$. This necessary condition will represents the fact that $\Delta_{g,U^*}$ commutes with the $\tau^*(K)$-action on $U^*$ in some sense. So, the representation $U^*$ itself will produce some algebraic structural extra symmetries commuting with the operator $\Delta_{g,U^*}$. This should have an impact on the eigenspaces. 

So, the generic expected panorama in the space of $G$-invariant metrics is that the eigenspaces of $\Delta_{g,U^*}$ are described by the geometric prescribed symmetries $G$, the algebraic representation $U^*$ and possibly new hidden symmetries induced by the conditions inherent to the spectral problem in this context.

The main idea to establish such a generic panorama for the spectrum of $\Delta_{g,U^*}$ is to control the multiplicities of the eigenvalues in each irreducible submodule of $C^\infty(G,K;U^*)$. These multiplicities must be the minimum as possible. Beyond that, it is very important to guarantee that nonequivalent irreducible $G$-submodules of $C^\infty(G,K;U^*)$ do not share common eigenvalues, unless they are related by certain symmetries (apparent or not) in the adjacent structures of the objects in the spectral problem.

\,

\textbf{The first main result -- hidden symmetries for a generalized Laplacian's spectrum associated to left-invariant metrics on compact Lie groups}

\,

In Section \ref{sec:generalized-laplacians}, we prove Theorem \ref{teo:cap2-grupos-de-Lie-metricas-invariantes-a-esquerda}. Basically, it gives an algebraic criterium for a compact and connected Lie group which states that, when certain conditions are satisfied, we can not expect that the operator $\Delta_{g,U^*}$ has a $G$-simple spectrum for a generic left-invariant metric. Instead, we can construct a generalized group of symmetries $\widetilde{G}$ containing $G$ such that, under the conditions of the theorem, $\Delta_{g,U^*}$ has a $\widetilde{G}$-simple spectrum for a generic left-invariant metric. More specifically, if we put some inner product on $U^*$, allowing us to construct the orthogonal group $O(U^*)$, and if we define
\[
\mathbf{G} := \left\{ \begin{array}{l}
     \text{$G$, if $G$ admits only representations of real type,}  \\
     \text{$Q_8 \times G$, if $G$ admits any representation of complex or quaternionic type,}
\end{array} \right.
\]
where $Q_8$ is an isomorphic copy of the group of quaternions, then
the group $\widetilde{G}$ is given exactly by the product group $(O(U^*) \times \mathbf{G})$. The group $G$ organizes the $G$-isometries. The group $O(U^*)$ organizes the algebraic structural hidden symmetries, mentioned few paragraphs ago, for that kind of commutativity between $\Delta_{g,U^*}$ and the $\tau^*(K)$-action associated to the $K$-module $(\tau^*, U^*)$. Finally, the $Q_8$-group organizes some other algebraic hidden symmetries commuting with $\Delta_{g,U^*}$, which are induced by the quaternionic structural maps constructed from irreducible $G$-modules of complex or quaternionic type.

As an illustration, in Example \ref{ex:examples-Schueth}, we show that any group in the form 
\[
G = (SU(2) \times \cdots \times SU(2) \times T^n)/ \Gamma \, ,
\] 
where $\Gamma$ is a discrete central subgroup and $T^n$ is the $n$-torus, satisfies the conditions of Theorem \ref{teo:cap2-grupos-de-Lie-metricas-invariantes-a-esquerda} and, therefore, it satisfies the generic setting given by this result.

\,

\textbf{The second main result -- hidden symmetries in the spectral problem on compact normal homogeneous spaces}

\,

Backing to the compact and connected irreducible symmetric spaces or, more generally, normal homogeneous spaces in the form $M=G/K$, how do we find hidden symmetries playing role in the generic setting of the eigenspaces?

In fact, in Section \ref{sec:generalized-laplacians-normal-homogeneous-spaces}, we found certain combinatorial groups of non apparent symmetries in the root system lattice associated to $(G,K)$ impacting in the spectral description. In some way, this approach has similarities with that one studied in \cite{rontgen2021latent}, since the authors also construct latent hidden symmetries in certain lattices in their article.  These combinatorial groups have dislocated orthogonal transformations in the root system and the action of one of them puts the highest weights, associated to irreducible $G$-modules with the same Casimir eigenvalue, in the same orbit transitively, as we proved in Theorem \ref{th:transitive-action}. Thus, these dislocated orthogonal transformations are hidden symmetries preserving spectral properties.

There are also other hidden symmetries related to orthogonal groups constructed from the $K$-representation $U^*$ (see more on Subsection \ref{subsec:operators-on-normal-homogeneous-spaces}).

From Theorem \ref{th:transitive-action} and the idea on the last two paragraphs, we achieved some important consequences for the generic spectral setting of a compact and connected normal homogeneous space as the content of Theorems \ref{th:sec4-esps-homog-normais-mesmo-autovalor-casimir}
and \ref{th:sec4-esps-homog-normais-mesmo-autovalor-casimir-caso-real}. Theorem \ref{th:sec4-esps-homog-normais-mesmo-autovalor-casimir} studies the the complex version of the operator, while Theorem \ref{th:sec4-esps-homog-normais-mesmo-autovalor-casimir-caso-real} is the study of its real version.  They basically state that, for a generic normal metric, there is a  set $\widetilde{G}$ of generalized symmetries containing $G$, the hidden symmetries from the combinatorial groups in Theorem \ref{th:transitive-action} and the hidden symmetries associated to the orthogonal groups constructed from the $K$-module $U^*$, simultaneously. Examples \ref{ex:hodge-total-normal-homogeneous-spaces} and \ref{ex:hodge-total-rank1} are applications of these theorems to the spectral setting of the Hodge-Laplace operator associated to a generic normal metric. 

Illustrating the main ideas, in Example \ref{ex:hodge-total-rank1} we study the Hodge-Laplacian on compact rank one Lie groups. In this example, it is very clear which is the generalized symmetry group $\widetilde{G}$ containing $G$ and the important hidden symmetries which organize the eigenspaces in the main problem enunciated some paragraphs ago. It is proved that the group  $\widetilde{G} = O(U^*) \times (G \times G)$, where $O(U^*)$ is the orthogonal group of $U^*:= \bigoplus_p \wedge^p \g^{*\C}$ induced by an inner product, satisfies the $\widetilde{G}$-simplicity of the Hodge-Laplacian's spectrum for a generic bi-invariant metric. In this construction, $O(U^*)$ organizes the structural algebraic hidden symmetries in $\tau^*(K)$ which produces some kind of commutativity between $\Delta_{g,U^*}$ and the action of the $K$-module $(\tau^*,U^*)$ (we already discussed this process twice, previously in this introduction). On the other hand, the group $G \times G$ organizes the isometries related to the bi-invarance of the metric, that is, the first $G$-factor organizes the left-translations and the second $G$-factor organizes the right-translations (or vice versa).

\,

\textbf{The third main result -- generic estimate for a generalized Laplacian's spectrum associated to left-invariant metrics on compact Lie groups}

\,

Finally, in Section \ref{sec:back-to-Lie-groups-left-metrics}, we provide an interesting application on Lie groups with left-invariant metrics, by connecting the ideas of Sections \ref{sec:generalized-laplacians} and \ref{sec:generalized-laplacians-normal-homogeneous-spaces}. More specifically, in Corollary \ref{cor:cap2-grupos-de-Lie-metricas-invariantes-a-esquerda-aprimorado}, we show that each eigenspace of the given generalized Laplacian has some kind of upper quota. This means that there is a limited number of distinct irreducible $G$-modules inside of each eigenspace for a generic left-invariant metric $g$, and we describe which are these possible irreducible $G$-modules. This result is refereed as a generic estimate for the spectrum and it holds for arbitrary compact and connected Lie groups.




\,

\section{Preliminaries}

\,
Convention: all the manifolds $M$ and Lie groups, commonly denoted by $G$ and $K$, are assumed to be connected and compact throughout the whole article.

Unless we make other mentions, we consider $M = G/K$ as a compact connected $m$-dimensional homogeneous space with $G$ being a $N$-dimensional connected, compact and simple Lie group acting by left translations on $M$. $K$ is a 
 $k$-dimensional closed subgroup of $G$. We fix $g_0$ a bi-invariant metric on $G$ that induces on the Lie algebra level the splitting $\g = \k \oplus \m$, where $\m$ is the isotropy $K$-representation. We also fix
$(\tau, U)$ as a real or complex finite dimensional $K$-representation with action $\tau$ on the space $U$. The dual of $(\tau.U)$ is denoted by $(\tau^*,U^*)$. We take $g$ as an arbitrary $(G\times K)$-invariant metric on $G$ (i.e. invariant by $G$-left translations and $K$-right translations), which corresponds to a $G$-invariant metric on $M$. We remark that when the $G$-invariant metric $g$ on $M$ corresponds to a bi-invariant metric on $G$, then $(M,g)$ is called a normal homogeneous space with normal metric $g$. $\widehat{G}$ denotes a complete set of nonequivalent irreducible representations of the Lie group $G$. $\mathcal{U}(\g^{\C})$ is the universal enveloping algebra of $\g$.

We begin with some notation and basic definitions. First, we define the induced $G$-module of the $K$-module $U^*$ by
\[
C^\infty(G,K;U^*) := \{ f \in C^{\infty}(G,U^*) \; | \; \forall x \in G, \, \forall k\in K, f(xk)=\tau^*(k^{-1})f(x)   \} \, ,
\]
which is endowed with the left action
\[
L(x)f := f \circ \ell_{x^{-1}} \, , \; \forall x \in G, \; \forall f \in C^\infty(G,K;U^*),
\]
where $\ell_{x^{-1}}:G\ni y \mapsto x^{-1}y\in G$ is the left translation by $x^{-1}$. 

Now, we suppose that the real (resp. complex) representation $U^*$ is endowed with a $G$-invariant real (resp. hermitian) inner product $\langle \, \cdot \, , \, \cdot \, \rangle$ and we define
\[
\langle f_1 , f_2 \rangle_{L^2} := \int_G \langle f_1(x) , f_2(x) \rangle dx \, , \; \forall f_1,f_2 \in C^\infty(G,K;U^*) \, ,
\]
where $\int_G (\cdot) dx$ is the Haar integral of $G$ with unitary volume. So, we have the $L^2$-completion of $C^\infty(G,K;U^*)$ denoted by $L^2(G,K;U^*)$. This new space has a Peter-Weyl decomposition given by the Hilbert sum
\begin{equation} \label{eq:peter-weyl-general-decomposition}
    \bigoplus_{V^* \in \widehat{G}_{K,U^*}} \Isot(V^*) \simeq \bigoplus_{V^* \in \widehat{G}_{K,U^*}} (V \otimes U^*)^K \otimes V^* \, ,
\end{equation}
where $\widehat{G}_{K,U^*}:= \{ (\Pi^*,V^*)\in \widehat{G} \; | \, \dim (V\otimes U^*)^K>0 \}$ and $\Isot(V^*)$ denotes the $V^*$-isotypical component of $L^2(G,K;U^*)$ with respect to the $L$-action, endowed with an isomorphism
\begin{equation} \label{eq:isomorphisms-isotypical-general-decomposition}
    \begin{array}{c}
     \varphi_{V^*}: (V\otimes U^*)^K \otimes V^* \ni (v\otimes \omega ) \otimes \xi \mapsto \phi_{v,\omega,\xi} \in   \Isot(V^*) \; , \\
    \text{where $\phi_{v,\omega,\xi} := [(\Pi^*(\cdot)^{-1}\xi)(v)]\omega = [\xi(\Pi(\cdot)v)]\omega $ ,}
\end{array}
\end{equation}
defined by linear extension (for more details see \cite{Brocker1985}). 

Our main goal is to study operators $\Delta_{g,U^*} : L^2(G,K;U^*) \to L^2(G,K;U^*)$ commuting with the $(G\times K)$-isometries with respect to the metric $g$, by describing them inside each isotypal component in the decomposition \ref{eq:peter-weyl-general-decomposition} and in terms of the isomorphisms \ref{eq:isomorphisms-isotypical-general-decomposition}.

It is well known that for each complex finite dimensional $G$-module $V$, we have the $G$-isomorphism $V^* \simeq \overline{V}$, where $\overline{V}$ is the conjugate representation of $V$, that is, $\overline{V}$ coincides with $V$ as sets but any complex scalar $z$ acts as $\overline{z}$ in the scalar multiplication of $\overline{V}$.

A $G$-module $V$ is called of \emph{real type} if it admits a linear $G$-map $J_V:V \to \overline{V}$ such that $J_V^2 = \id$. $V$ is called of \emph{quaternionic type} if it admits a linear $G$-map $J_V:V \to \overline{V}$ such that $J_V^2 = -\id$. In both cases, $J_V$ is called a \emph{structural map}.  $V$ is called of \emph{complex type} if $V \ncong \overline{V}$ as $G$-modules, i.e. it is not of real type neither of quaternionic type. \footnote{Some authors prefer to define the structural map $J_V$ as a \emph{anti-linear} $G$-map in the form $J_V: V \to V$, instead of a \emph{linear} $G$-map $J_V:V \to \overline{V}$.}
For $V$ of complex type, we have that $V \oplus \overline{V} \simeq \H \otimes V$ is of quaternionic type (for more details, see \cite[Sec.II-6]{Brocker1985}).

The representations $V$ of quaternionic type are identified as representations in the category $\Rep(G,\H)$ (i.e. with scalars on $\H$) and, under this identification, the structural map $J_V$ is simply the left multiplication by the quaternionic element $j$. 

For each $V \in \widehat{G}$ there exists an irreducible real $G$-representation $V_{\R}$ such that 
\begin{equation} \label{eq:types-real} 
    \C \otimes V_{\R} \simeq \left\{ \begin{array}{l}
     \text{$V$, if $V$ is of real type,} \\
     \text{$\H\otimes V$, if $V$ is of complex or quaternionic type}
\end{array} \right. 
\end{equation}
(see \cite[Sec.1]{Petrecca2019} or \cite[Sec.2]{Schueth2017}).

Define $\Isot(V,\overline{V}) := \Isot(V)+\Isot(\overline{V}) \subset L^2(G,K;\C) $. Since $V \simeq \overline{V}$ for $V$ of real or quaternionic type, and $V \ncong \overline{V}$ for $V$ of complex type (as $G$-modules) then 

\begin{equation} \label{eq:isotipica}
\Isot(V,\overline{V}) = 
\begin{cases}
\Isot(V) + \Isot(\overline{V}) \text{, for $V$ of complex type,}\\
\Isot(V) \text{, for $V$ of real or quaternionic type.}
\end{cases}
\end{equation}
\,

Finally, we mention that when $K=\{e\}$ we have $C^\infty(G,K;U^*) = C^\infty(G;U^*)$ and we can also endow it with the right action
\[
R(x)f := f \circ r_{x} \, , \; \forall x \in G, \; \forall f \in C^\infty(G;U^*),
\]
where $r_{x}:G\ni y \mapsto yx\in G$ is the right translation by $x$. Therefore, each left construction on $C^\infty(G;U^*)$ has an analogous equivalent right construction and they are related by the inversion map on the Lie group $G$.

For details, see more on \cite{Brocker1985}.
\,


\subsection{Laplace operators} \label{preliminaries:laplace-operators}

We assume that $\{Y_j\}_{j=1}^N$ is a $g$-orthonormal basis of $\g$. For an arbitrary $G$-representation $(\Pi,V)$ (finite dimensional or not), with induced representation $\Pi_*:\U(\g^{\C}) \to \End(V)$, we define
\[
\Delta_g^V := -\sum\limits_{j=1}^N \Pi_*(Y_j^2) : V \to V \; .
\]
For a bi-invariant metric $g$, $\Delta_g^V$ is commonly referred as the Casimir element (or Casimir operator) of the representation $V$. We also define the operator
\begin{equation*}
    \Delta_g^{V^K} := -\sum\limits_{j=k+1}^N \Pi_*(Y_j^2) : V^K \to V^K \; ,
\end{equation*}    
which satisfies $\Delta_g^{V^K}= \Delta_g^V|_{V^K}: V^K \to V^K$.

\begin{remark}
If we follow the construction on \cite[Sec.1]{Petrecca2019}  by taking $V := C^\infty(G,K;\C)$, then we note that $\Delta_{g}^V$ coincides with the Laplace-Beltrami operator on $M$, up to an identification. Similarly, if we follow \cite[Secs.1-2]{Ikeda1978} by taking $g$ as a $(G\times G)$-invariant metric (a normal metric) and $V := C^\infty(G,K;\wedge^p \m^{*\C})$, then $\Delta_{g}^V$ is a Casimir operator which coincides (up to an identification) with the Hodge-Laplacian on $M$, acting on differential complex $p$-forms.
\end{remark}

\begin{remark} \label{remark:eigenspaces-j-invariant}
By \cite[Sec.1]{Petrecca2019}, if $V$ is a $G$-module of quaternionic type with structural map $J_V: V \to \overline{V}$, then the eigenspaces of $\Delta_g^V$ are $J_V$-invariant.
\end{remark}

\begin{remark} \label{remark:LaplacianV-LaplacianV*}
Also by \cite[Sec.1]{Petrecca2019}, for all $\xi \in V^*$, $\Delta_g^{V^*} \xi = \xi \circ \Delta_g^V $.
\end{remark}

\,


\subsection{Basic facts on root systems} \label{preliminaries:root-systems}

For a compact normal homogeneous space $(M,g)$, $M=G/K$, we know the Freudenthal's formula. That is, if we take $V^\mu \in \widehat{G}$ the irreducible representation with highest weight $\mu$ and $\delta$ the half sum of positive roots of the root system of $\g$, then 
\begin{equation} \label{eq:Freudenthal-formula}
    \begin{array}{l}
    \Delta_g^{V^\mu} = \lambda_\mu \id : V^\mu \to V^\mu \, , \\
      \text{where } \lambda_\mu := a_\mu^2 - g(\delta,\delta) \; ,  \\
     \text{and $a_\mu := g( \mu + \delta,\mu+\delta)^{1/2} $ .}
\end{array}
\end{equation}
The scalar $\lambda_{\mu}$ is called the Casimir eigenvalue of $V^\mu$ (or of $\mu$).

\begin{remark} \label{remark:same-casimir-eigenvalue}
    Note that $V^\mu, V^\eta \in \widehat{G}$ have the same Casimir eigenvalue if, and only if, $a_\mu = a = a_\eta$. The last claim is the same to say that $\mu$ and $\eta$ must be in the same sphere $\S_a(-\delta)$, with radius $a$ and centered in $-\delta$, in the root system of $\g$.
\end{remark}

\begin{remark} \label{remark:unitary-weight-vector}
  Suppose that the irreducible representation $V^\mu$ is endowed with a chosen $G$-invariant inner product and denote by $|| \cdot ||$ its induced norm. It is well known that the weight space associated to $V^\mu$ is an one-dimensional vector subspace of $V^\mu$. Therefore, in these conditions, there exists an unique weight vector $v_{\mu} \in V^\mu$ such that $||v_\mu|| = 1$, which we will refer as the \emph{unitary highest weight vector} of $V^\mu$.
\end{remark}

Let $(\a, R_{\g,\k})$ the root system associated to the pair $(G,K)$. Assume that $(\a, R_{\g,\k})$ is irreducible and that it is endowed with inner product $(\cdot,\cdot)$ (which is unique up to a scalar). The half sum of positive roots is $\delta$ and the Weyl group is $W$. We also fix a Weyl Chamber $C$ and define the lattices
\[
\Gamma_G := \{ H \in \a \, ; \; \exp_G(2\pi H) = e \}
\]
and
\[
\I := \sum\limits_{\alpha \in R_{\g,\k}} \Z \cdot \alpha^\vee \; ,
\]
where $\alpha^\vee := \frac{2}{\langle \alpha, \alpha \rangle } \alpha$. We recall that in the simply connected case, we have $\Gamma_G^* = \I^*$.

So, the spherical representations (i.e. the $G$-irreducible modules $V$ such that $\dim V^K >0$) are in one-to-one correspondence with the set $C \cap \Gamma_G^*$. 

For more details, see \cite[Chs. 8-10]{Hall2015}, \cite[Chs. VII and X]{Helgason2001} and \cite[Ch. II]{helgason2008geometric}.

\,


\subsection{Basic facts on the quaternion group $Q_8$} \label{preliminaries:Q8} 

It is well known that the finite quaternion group
\[
Q_8 := \{ \pm 1, \pm i, \pm j, \pm ij \}
\]
has only one $2$-dimensional complex irreducible representation which is isomorphic to faithful representation $\H$ (the ring of quaternions endowed with the standard $Q_8$-action by left multiplication). The other complex irreducible representations are of degree $1$.

\,


\subsection{The space of $G$-invariant metrics} \label{preliminaries:G-invariant-metrics}
We know that the $G$-invariant metrics on $M = G/K$ are in one-to-one correspondence with $(G \times K)$-invariant metrics on $G$. There are other characterizations that we will see now. First, let
\begin{equation*}
    \begin{array}{l}
        \sym_K(\m):=  \{ \kappa \in \End(\m) ; \; \text{$\kappa$ is symmetric and $\Ad_K$-equivariant}  \} \, , \\
        \sym_K^+(\m):= \{ \kappa \in \sym_K(\m)  ; \; \text{each eigenvalue of $\kappa$ is positive}  \} \, .
   \end{array}
\end{equation*}
Note that $\sym_K(\m)$ has an induced Euclidean topology from $\End(\m)$. The same is true for $\sym_K^+(\m)$.

For each $\kappa \in \sym_K^+(\m)$, we define 
\[
g_\kappa := g_0(\kappa^{-1}(\cdot) \, , \, \cdot):\m \times \m \to \R
\]
which corresponds to a $G$-invariant metric on $M$. Therefore, the set of $(G\times K)$-invariant metrics on $G$ can be identified as the space $\sym_K^+(\m)$, which turned out to be an open subset of $\sym_K(\m)$ (see more on \cite[Sec.1]{Petrecca2019}).

For our purposes, a residual subset in a topological space is a subset that can be expressed as a countable intersection of open dense subsets in this space. In this context,  the $G$-invariant metrics on $M$ associated to a residual subset of $\sym_K^+(\m)$ are called \emph{generic}. 

So, in this identification, the operators on Subsection \ref{preliminaries:laplace-operators} are actually indexed by a parameter $\kappa \in \sym_K^+(\m)$. We can generalize these constructions to operators with a parameter $\kappa \in \sym_K(\m)$ (a more general space). We do this in the following subsection. 

\,


\subsection{Operators on each representation}
\label{prelimiaries:operators-on-each-representation}
Fix a $g_0$-orthonormal basis $\{Y_j\}_{j=1}^N$ of $\g$ such that $\{Y_j\}_{j=k+1}^N$ is an orthonormal basis of $\m$ and let $(\rho,V) \in \widehat{G}$.

Now, we can define for each $\kappa = (\kappa_{ij}) \in \sym_K(\m)$ the operators
\begin{equation*}
    D^{V}(\kappa) := - \sum_{i,j = 1}^N \kappa_{ij} \, \rho_*(Y_i \cdot Y_j) : V \to V  \; .
\end{equation*}
Note that, when $k \in \sym_K^+(\m)$ corresponds to a metric $g$, then $D^V(\kappa) = \Delta_g^V$ and in this case we can write $ \Delta_\kappa^V := \Delta_g^V = D^V(\kappa)$. Therefore, the family of operators $\{ D^V(\kappa) \}_{\kappa \in \sym_K(\m)}$ is bigger than the family $\{ \Delta_\kappa^V \}_{\kappa \in \sym_K^+(\m)}$.

For each $\kappa \in \sym_K(\m)$, we can also define
\begin{equation*}
    D^{V^K}(\kappa) = -\sum\limits_{i, j=k+1}^N \kappa_{ij} \, \rho_*(Y_i \cdot Y_j) : V^K \to V^K \; .
\end{equation*}
Similarly, when $g$ corresponds to a $\kappa \in \sym_K^+(\m)$, we have $\Delta_{\kappa}^{V^K} = D^{V^K}(\kappa)$.

We note that $D^{V^K}(\kappa) = D^V(\kappa)|_{V^K}:V^K \to V^K$. For details on the precedent constructions, we refer to \cite[Sec.1]{Petrecca2019}.

\,


\subsection{Recent advances in the literature} \label{preliminaries:recent-advances-results-literature}
Now, we mention some important advances presented by \cite{Petrecca2019} and \cite{Schueth2017}.  Let $V, V_1,V_2 \in \widehat{G}$ and $\kappa \in \sym_K(\m)$.

There exists a map $\res:\C[t]\times \C[t] \to \C$, called \emph{resultant}, that satisfies for each pair of polynomials $p,q \in \C[t]$: (i) $\res$ is a polynomial map such that $\res(p,q)$ is a polynomial on the coefficients of $p$ and $q$; (ii) $\res(p,q)$ is the zero polynomial if and only if $p$ and $q$ share any common zero. This map is useful because it allow us to compare the eigenvalues of the operators $D^{V^K}(\kappa)$ (or $D^V(\kappa)$) by taking their corresponding characteristic polynomials. Define
\begin{equation*}
     \begin{array}{ccl}
        p_{V}(\kappa) &:=& \text{charac.polynomial}(D^{V^K}(\kappa)) \, ,\\
        a_{V_1,V_2} (\kappa) &:=& \res\left(p_{V_1}(\kappa), p_{V_2}(\kappa)\right) \, , \\
        b_{V}(\kappa) &:=& \res\left(p_{V}(\kappa), \frac{d}{dt} p_{V}(\kappa)\right) \, , \\
        c_{V}(\kappa) &:=& \res\left(p_{V}(\kappa), \frac{d^2}{dt^2} p_{V}(\kappa)\right)  .
    \end{array}
\end{equation*}
Thus we have polynomial maps
\[
a_{V_1,V_2},\,b_V,\,c_V: \sym_K(\m) \to \C
\]
and the following theorem:
\begin{theorem}
\small \label{teo:criterio-metrica-G-simples-generica} 
There exists a $G$-invariant metric $g$ such that the Laplace-Beltrami operator $\Delta_g$ on $M$ has a real $G$-simple spectrum if and only if the items bellow are simultaneously satisfied:

      \begin{enumerate}
          \item For all $V_1,V_2 \in \widehat{G}_K$, with $V_1 \ncong V_2$ and $V_1 \ncong V_2^*$, $a_{V_1,V_2}$ is not the zero polynomial.

          \item For all $V\in \widehat{G}_K$ of real or complex type, $b_{V}$ is not the zero polynomial.

          \item For all $V\in \widehat{G}_K$ of quaternionic type, $c_{V}$ is not the zero polynomial.
      \end{enumerate}
Moreover, the existence of a such metric is equivalent to say that the Laplace-Beltrami operator of a generic $G$-invariant metric on $M$ has real $G$-simple spectrum.
\end{theorem}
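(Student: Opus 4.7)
The plan is to reduce the statement to a condition on each finite-dimensional block of the Peter--Weyl decomposition and then exploit the genericity machinery of polynomial maps on $\sym_K(\m)$. Concretely, by (\ref{eq:peter-weyl-general-decomposition}) with $U^*=\C$ we have
\[
L^2(G,K;\C)\simeq \bigoplus_{V\in\widehat{G}_K} V^K\otimes V,
\]
and (by the construction recalled in Subsection \ref{prelimiaries:operators-on-each-representation}) for every $\kappa\in\sym_K^+(\m)$ corresponding to a metric $g$ the operator $\Delta_g$ acts on the block $V^K\otimes V$ as $D^{V^K}(\kappa)\otimes\id_V$. Thus real $G$-simplicity is equivalent to the simultaneous algebraic conditions: (i) for distinct real-irreducible components, $D^{V_1^K}(\kappa)$ and $D^{V_2^K}(\kappa)$ share no eigenvalue (note that $V$ and $V^*\simeq\overline V$ merge into a single real irreducible precisely when $V$ is of complex type, which is why $V_1\ncong V_2^*$ is excluded); (ii) for $V$ of real or complex type, $D^{V^K}(\kappa)$ has simple spectrum; (iii) for $V$ of quaternionic type, the $J_V$-invariance of eigenspaces (Remark \ref{remark:eigenspaces-j-invariant}) forces each eigenvalue to have even multiplicity, so the sharp condition is that every eigenvalue of $D^{V^K}(\kappa)$ has multiplicity exactly $2$.

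For the sufficiency direction I would argue as follows. The three conditions translate to non-vanishing of scalar polynomials: (i) the resultant $a_{V_1,V_2}(\kappa)$ vanishes iff $p_{V_1}(\kappa)$ and $p_{V_2}(\kappa)$ share a root; (ii) $b_V(\kappa)=\res(p_V,p_V')$ is the discriminant, vanishing iff $p_V(\kappa)$ has a multiple root; (iii) for $V$ of quaternionic type, $p_V(\kappa)=\tilde p(t)^2$ identically (so $b_V\equiv 0$), and a direct computation shows that $\res(p_V,p_V'')$ vanishes on $\kappa$ iff $\tilde p$ has a repeated root, i.e.\ some eigenvalue of $D^{V^K}(\kappa)$ has multiplicity $>2$. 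Hence if each of the stated polynomial maps $\sym_K(\m)\to\C$ is non-zero, its real zero locus is a proper real-algebraic subset of $\sym_K(\m)$, having empty interior, and its complement intersected with the open set $\sym_K^+(\m)$ is open and dense. Taking the countable intersection over $\widehat{G}_K$ (which is countable) produces a residual subset of $\sym_K^+(\m)$ for which all conditions (i)--(iii) hold simultaneously; any such $\kappa$ yields a metric with real $G$-simple spectrum, proving both the existence clause and the ``generic'' clause.

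For the necessity direction, if say $a_{V_1,V_2}$ is the zero polynomial, then for every $\kappa\in\sym_K^+(\m)$ the operators $D^{V_1^K}(\kappa)$ and $D^{V_2^K}(\kappa)$ share an eigenvalue, forcing $\Delta_g$ to have an eigenspace containing inequivalent real $G$-irreducibles coming from $\Isot(V_1,\overline{V_1})$ and $\Isot(V_2,\overline{V_2})$, so no metric can be real $G$-simple. The same argument with $b_V$ (real/complex case) or $c_V$ (quaternionic case) covers (ii) and (iii). The main subtlety — and the step that truly needs care — is the quaternionic case: one must verify (via Remark \ref{remark:eigenspaces-j-invariant} and the identification $V_\R\simeq V$ as real $G$-modules from (\ref{eq:types-real})) that ``real $G$-simple'' really corresponds to every eigenvalue of $D^{V^K}(\kappa)$ having multiplicity exactly $2$, and consequently that the second-derivative resultant $c_V$, rather than the discriminant $b_V$, is the correct polynomial invariant to test. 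Once this identification is in place, the rest is bookkeeping on the Peter--Weyl decomposition and on the elementary properties of resultants and zero sets of polynomials.
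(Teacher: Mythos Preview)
The paper does not actually prove this theorem: it is stated in Subsection~\ref{preliminaries:recent-advances-results-literature} as a result quoted from Petrecca--R\"oser \cite{Petrecca2019} (and Schueth \cite{Schueth2017}), with no proof supplied. Your proposal is correct and is precisely the argument used in those references; the paper later reuses the same template (polynomial maps $a_{V_1,V_2},b_V,c_V$ on $\sym_K(\m)$, zero-set avoidance, countable intersection) in its proof of Theorem~\ref{teo:cap2-grupos-de-Lie-metricas-invariantes-a-esquerda}, so your write-up is fully aligned with both the cited source and the paper's own methodology.

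One cosmetic slip: in the Peter--Weyl block you wrote $V^K\otimes V$, but with the paper's conventions (equation~\eqref{eq:peter-weyl-general-decomposition} with $U^*=\C$) the isotypical component is $V^K\otimes V^*$, and $\Delta_g$ acts as $D^{V^K}(\kappa)\otimes\id_{V^*}$. This does not affect the argument. Your handling of the quaternionic case is correct: since $p_V=\tilde p^{\,2}$, evaluating $p_V''=2(\tilde p')^2+2\tilde p\,\tilde p''$ at a root $\alpha$ of $\tilde p$ gives $2\tilde p'(\alpha)^2$, so $c_V(\kappa)=\res(p_V,p_V'')$ vanishes exactly when $\tilde p$ has a repeated root, i.e.\ when some eigenvalue of $D^{V^K}(\kappa)$ has multiplicity $>2$.
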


\begin{example} \label{ex:Schueth-SU2}
    Let $G = (SU(2) \times \cdots \times SU(2) \times T^n) / \Gamma $, where $\Gamma$ is a discrete central subgroup. Then for a generic left-invariant metric $g$ on $G$, the Laplace-Beltrami operator $\Delta_g$ is real $G$-simple. This result was proven by Schueth \cite{Schueth2017} in 2017.
\end{example}

When $M = G/K$ is a compact standard normal homogeneous space with metric $g$, then for each irreducible $G$-representation the operator $\Delta_g^V: V \to V$ is the Casimir element on $V$.  Thus, each operator $\Delta_g^{V^K} = \Delta_g^V|_{V^K}:V^K \to V^K$ is a multiple of the identity. In particular, for compact connected symmetric spaces, $\Delta_g^{V^K}$ has simple spectrum if and only if $\dim_{\C} V^K = 1$.

We say that $K$ is a \emph{spherical subgroup} of $G$ if, for each $V\in \widehat{G}_K$, we have that $\dim_{\C} V^K = 1$. Thus, as $K$ is a spherical subgroup, each operator $\Delta_g^{V^K}$ has a single eigenvalue and $\Isot(V^*) \simeq V^K \otimes V^* \simeq  \C \otimes V^* \simeq V^*$. So, the verification of the Theorem \ref{teo:criterio-metrica-G-simples-generica}, in this context, reduces to the verification of the item (1), that is, we only need to verify that distinct isotypical components do not produce common eigenvalues. It can be shown that for every irreducible symmetric pair $(G,K)$, the subgroup $K$ is spherical with respect to $G$. However we warn that not every subgroup $K$ of $G$ is spherical for an arbitrary homogeneous space $M=G/K$ (not even for a normal homogeneous space).

\begin{remark}\label{remark:real-or-complex-spherical-group} By \cite[Sec.1]{Petrecca2019}, when $K$ is a spherical subgroup of $G$, then the real version of the Laplace-Beltrami operator associated to a $G$-invariant metric $g$ on $G/K$ has a $G$-simple spectrum if and only if the complex version of Laplace-Beltrami operator associated to the metric $g$ has a $G$-simple spectrum. Therefore, in this context, there is no difference if we choose to study the $G$-simplicity of the real Laplacian or to study the $G$-simplicity of its complex version. 
\end{remark}

\begin{example} \label{ex:Petrecca-Symmetric-Spaces}
    Let $M$ a compact irreducible symmetric space. Then the Laplace-Beltrami operator is real $G$-simple if and only if $\rank(M) = 1$. Also, if $M$ is a product of compact rank one symmetric spaces, then the Laplace-Beltrami operator is real $G$-simple. In 2018, Petrecca and Röser \cite{Petrecca2019} argued that for $\rank(M) = 1$ there is only one spherical irreducible representation for each Casimir eigenvalue and that representation must be of real type, while for $\rank(M) \geq 2$ we can construct two or more non-equivalent and non-dual spherical representations with the same Casimir eigenvalue.
\end{example}

\,


\section{Generalized Laplacians on Lie groups with left-invariant metrics} \label{sec:generalized-laplacians}

\,

Let $M = G$ a compact connected Lie group endowed with a left-invariant metric $g$. In this section, we take $K =\{e\}$ and $\m = \g$.  Consider  $\{Y_j \}$ a $g$-orthonormal basis and $(\tau^*,U^*)$ a complex $K$-representation with a real form $U^*_{\R}$. Since $K=\{e\}$, then $U^*$ is merely an arbitrary finite-dimensional vector space.

We define
\[
\Delta_{g,U^*}:= -\sum\limits_{j} R_*(Y_j)^2 :C^\infty(G,U^*) \to C^\infty(G;U^*) \, ,
\] 
\[
\Delta_{g,U^*_{\R}}:= -\sum\limits_{j} R_*(Y_j)^2 :C^\infty(G,U^*_{\R}) \to C^\infty(G;U^*_{\R}) \, ,
\] 
where $R_*:\U( \g^{\C}) \to \End(C^\infty(G,K;U^*))$ is induced by the right regular representation, by taking its derivative on the identity element and its extension to the universal enveloping algebra $\U(\g^{\C})$.

Throughout this section, for each $V^* \in \widehat{G}$, we denote $\Isot(V^*)_\lambda$, $\Isot(V_{\R})_\lambda$ and $V_\lambda$ as the $\lambda$-eigenspaces of $\Delta_{g,U^*}|_{\Isot(V^*)}$, $\Delta_{g,U^*_{\R}}|_{\Isot(V_{\R})}$ and $\Delta_g^V$, respectively. Also, we put the set $\Isot(V,\overline{V})_\lambda$ as the $\lambda$-eigenspaces of $\Delta_{g,U^*}|_{\Isot(V, \overline{V})}$. Also, we recall that $V^* \simeq \overline{V}$ as $G$-modules.

For convenience of the representation theory, we begin studying the complex version $\Delta_{g,U^*}$ and, after this, we relate it to its corresponding real version $\Delta_{g,U^*_{\R}}$.


\,

\subsection{The operator $\Delta_{g,U^*}$ on each isotypical component of $L^2(G,U^*)$}

\,

\,

Of course, we can view $\Delta_{g,U^*}$ as an extended operator 
\[
\Delta_{g,U^*}:L^2(G,U^*) \to L^2(G,U^*) \, .
\]
By the isomorphisms in equation \ref{eq:isomorphisms-isotypical-general-decomposition}, we can  identify each isotypical component $\Isot(V^*)$ of $L^2(G;U^*)$ as $(V \otimes U^*)\otimes V^*$. We want to study the eigenspaces of $\Delta_{g,U^*}$ restricted to each of these isotypical components. Under the precedent identification, we have the following theorem: 

\begin{theorem} \label{th:sec3-laplacian-representative-functions} Let $G$ a compact and connected Lie group endowed with left-invariant metric $g$. Consider $U^*$ a finite-dimensional vector space. Then, the generalized Laplace operator $\Delta_{g,U^*}:L^2(G,U^*) \to L^2(G,U^*)$ can be identified as the operator
\[
\bigoplus_{V^* \in \widehat{G}}(\Delta_g^V \otimes \id)\otimes\id : \bigoplus_{V^* \in \widehat{G}} (V \otimes U^*) \otimes V^* \to \bigoplus_{V^* \in \widehat{G}} (V \otimes U^*) \otimes V^*  \, .
\]

In particular, the $\lambda$-eigenspace $\Isot(V^*)_\lambda$ of $\Delta_{g,U^*}|_{\Isot(V^*)}$ satisfies 
\[
\Isot(V^*)_\lambda \simeq (V_\lambda \otimes U^*)\otimes V^* \simeq U^* \otimes (V^*)^{\oplus \dim_{\C} V_\lambda } \; .
\]
\end{theorem}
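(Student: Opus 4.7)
The plan is to compute the action of $\Delta_{g,U^*}$ directly on the representative functions $\phi_{v,\omega,\xi}$ that parametrize each isotypical component via the isomorphism $\varphi_{V^*}$ in equation \ref{eq:isomorphisms-isotypical-general-decomposition}. Since $K=\{e\}$, the condition $(V\otimes U^*)^K$ becomes simply $V\otimes U^*$, so the decomposition in \ref{eq:peter-weyl-general-decomposition} reads $L^2(G,U^*)\simeq\bigoplus_{V^*\in\widehat{G}}(V\otimes U^*)\otimes V^*$. Because $R_*$ is induced by right translations and the isotypical decomposition is built from the left action $L$, the two actions commute, so $\Delta_{g,U^*}$ preserves each $\Isot(V^*)$. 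Hence it suffices to identify $\Delta_{g,U^*}|_{\Isot(V^*)}$ on the generators $\phi_{v,\omega,\xi}$.

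First I would verify the key identity
\[
R_*(Y)\phi_{v,\omega,\xi}=\phi_{\Pi_*(Y)v,\omega,\xi},\quad\forall\,Y\in\g,
\]
by differentiating at $t=0$:
\[
(R_*(Y)\phi_{v,\omega,\xi})(x)=\tfrac{d}{dt}\big|_{t=0}\bigl[\xi(\Pi(x\exp tY)v)\bigr]\omega=\bigl[\xi(\Pi(x)\Pi_*(Y)v)\bigr]\omega=\phi_{\Pi_*(Y)v,\omega,\xi}(x).
\]
By iteration this gives $R_*(Y^2)\phi_{v,\omega,\xi}=\phi_{\Pi_*(Y^2)v,\omega,\xi}$, and since $R_*$ is an algebra homomorphism on $\U(\g^{\C})$ the same holds for arbitrary elements of the enveloping algebra. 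Summing over the $g$-orthonormal basis $\{Y_j\}$ yields
\[
\Delta_{g,U^*}\,\phi_{v,\omega,\xi}=-\sum_{j}\phi_{\Pi_*(Y_j)^2 v,\omega,\xi}=\phi_{\Delta_g^V v,\,\omega,\,\xi}.
\]

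Transporting this back through $\varphi_{V^*}$ shows that on $(V\otimes U^*)\otimes V^*$ the operator $\Delta_{g,U^*}|_{\Isot(V^*)}$ is precisely $(\Delta_g^V\otimes\id_{U^*})\otimes\id_{V^*}$, which proves the first claim after taking the Hilbert sum over $\widehat{G}$. The eigenspace description is then immediate from the tensor-product form: the $\lambda$-eigenspace of $(\Delta_g^V\otimes\id)\otimes\id$ is $(V_\lambda\otimes U^*)\otimes V^*$, and using commutativity of the tensor product
\[
(V_\lambda\otimes U^*)\otimes V^*\;\simeq\;U^*\otimes V_\lambda\otimes V^*\;\simeq\;U^*\otimes (V^*)^{\oplus\dim_{\C}V_\lambda},
\]
which gives exactly the stated isomorphism for $\Isot(V^*)_\lambda$.

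I do not expect any serious obstacle: the whole argument is a direct calculation and then a bookkeeping step through the Peter-Weyl isomorphism. The only subtle point to be careful about is the order of the factors and the fact that the $L$-action on $\Isot(V^*)$ acts only on the rightmost factor $V^*$ while $\Delta_{g,U^*}$, being built from the commuting right action, acts only on the $V$ factor (leaving $U^*$ and $V^*$ untouched); this is precisely what makes the identification with $(\Delta_g^V\otimes\id)\otimes\id$ valid and explains why $U^*$ appears merely as a multiplicity space in the eigenspace formula.
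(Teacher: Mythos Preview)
Your proposal is correct and follows essentially the same approach as the paper: both compute $\Delta_{g,U^*}$ directly on the representative functions $\phi_{v,\omega,\xi}$ and obtain the identity $\Delta_{g,U^*}\phi_{v,\omega,\xi}=\phi_{\Delta_g^V v,\omega,\xi}$, from which the tensor identification and the eigenspace description follow. The only cosmetic difference is that you first isolate the one-parameter identity $R_*(Y)\phi_{v,\omega,\xi}=\phi_{\Pi_*(Y)v,\omega,\xi}$ and then iterate, whereas the paper performs the double differentiation in one step; you also spell out why the operator preserves each isotypical component and how the final tensor rearrangement works, points the paper leaves implicit.
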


\begin{proof}
Let $(v\otimes \omega)\otimes \xi \in (V\otimes U^*)\otimes V^* \simeq \Isot(V^*)$ and recall the isomorphism \ref{eq:isomorphisms-isotypical-general-decomposition}. It suffices to prove that
\[
\Delta_{g,U^*} \, \phi_{v,\omega,\xi} = \phi_{(\Delta_g^V v), \omega, \xi} \;.
\]
In fact, for any $x \in G$
\[
    \begin{array}{ccl}
       \Delta_{g,U^*} \, \phi_{v,\omega,\xi}\,(x)  &=& -\sum\limits_j  [R_*(Y_j)^2 \, \phi_{v,\omega,\xi}](x)  \\
         &=& -\sum\limits_j \left. \frac{d^2}{dtds}\right|_{s,t=0} \phi_{v,\omega,\xi} (\, x\exp((t+s)Y_j) \,)  \\
         &=& -\sum\limits_j \left. \frac{d^2}{dtds}\right|_{s,t=0} [\,\xi(\, \Pi(x) \Pi(\exp((t+s)Y_j)) v \,)\,]\omega \\
         &=& [\,\xi(\, \Pi(x)\Delta_g^V v \,)\,]\omega \\
         &=& [\,(\Pi^*(x)^{-1}\xi) (\Delta_g^V v)\,]\omega\\
         &=& \phi_{(\Delta_g^V v), \omega, \xi}\,(x) \, . 
    \end{array}
\]
\end{proof}


\subsection{Real $G$-properties vs. complex $(Q_8 \times G)$-properties} \label{subsec:real-vs-complex}

\,

\,

The Laplacians can be studied over $\R$ or $\C$ (in the last case, by simple arguments of complexification). For each real version of certain Laplace operator, the corresponding complex version can be considered itself a generalized Laplacian. The classical problems in analysis and differential geometry usually consider the operators over $\R$. But, when the problem is closely related to the representation theory, it is convenient to present them over $\C$, in order to make a better use of the algebraic machinery of this theory. For example, the Laplace-Beltrami operator $\Delta_g : L^2(M,\R) \to L^2(M,\R)$ has a complex version $\Delta_g : L^2(M,\C) \to L^2(M,\C)$. The Hodge-Laplacian on differential $p$-forms $\Delta_g : \Omega^p(M,\R) \to \Omega^p(M,\R)$ has a complex version $\Delta_g : \Omega^p(M,\C) \to \Omega^p(M,\C)$.  In general, the real version $\Delta_{g,U^*_{\R}}$ has corresponding complex version $\Delta_{g,U^*}$.

It is convenient to establish how to migrate from the constructions over $\R$ to the constructions over $\C$. It turned out that $G$-properties on the real versions of Laplacians have to be translated to $(Q_8 \times G)$-properties on the corresponding complex versions, whenever $\widehat{G}$ admits any representation of complex or quaternionic type. The main idea is that, when we migrate the real version of the operator to its complex version, we automatically earn some structural algebraic $Q_8$-maps that commute with it, where $Q_8$ is the finite group of quaternions.

Now, consider the following constructions and identifications:

\begin{itemize} \small
    \item[(J1)] For $V \in \widehat{G}$ of real type, the real structural map $j:= J_V$, which satisfies $J_V^2 = \id_{V}$, induces a map $j\otimes \id \otimes \id: V \otimes U^* \otimes \overline{V} \to V \otimes U^* \otimes \overline{V}$.

    \item[(J2)] For $V \in \widehat{G}$ of quaternionic type, the quaternionic structural map $j:=J_V$ satisfies $j^2 = -\id_{V}$ and it induces a map $j\otimes \id \otimes \id: V \otimes U^* \otimes \overline{V} \to V \otimes U^* \otimes \overline{V}$.

    \item[(J3)] For $V \in \widehat{G}$ of complex type, the $G$-module $(V \oplus \overline{V})$ admits a quaternionic structural map $j: (V \oplus \overline{V}) \to (V \oplus \overline{V})$, which induces a map  
    \[
    j\otimes \id \otimes \id: (V\otimes U^* \otimes \overline{V})\oplus(\overline{V} \otimes U^*  \otimes V) \to (V\otimes U^*\otimes \overline{V})\oplus(\overline{V}\otimes U^*\otimes V) \, .
    \]
\end{itemize}
The maps $j\otimes \id\otimes \id$ on (J1), (J2) e (J3) can be extended to a map  
\[
J : \bigoplus_{V \in \widehat{G}} V \otimes U^* \otimes \overline{V} \to \bigoplus_{V \in \widehat{G}} V \otimes U^* \otimes \overline{V} \, ,
\]
which commutes with the operator
\begin{equation}  \label{eq:laplacianV-tensor-id-tensor-id}
\bigoplus_{V \in \widehat{G}} \Delta_g^{V} \otimes \id\otimes \id : \bigoplus_{V \in \widehat{G}} V \otimes U^* \otimes \overline{V} \to \bigoplus_{V \in \widehat{G}} V \otimes U^* \otimes \overline{V} \; ,
\end{equation}
(this commutativity follows from Remark \ref{remark:eigenspaces-j-invariant}). Therefore $J$ commutes with $\Delta_{g,U^*}$. Moreover, $\Delta_{g,U^*}$ commutes with the $Q_8$-group given by the following group of automorphisms on $L^2(G,K;U^*)$:
\begin{equation} \label{eq:Q8-group}
    Q_8 := \{ \pm \id, \pm i \id, \pm J, \pm iJ \} \, .
\end{equation}

\begin{remark} \label{remark:translating-G-properties-to-Q8xG-properties}
  We recall that irreducible representations of quaternionic or complex type $V$ satisfy that $V\oplus \overline{V}$ is the complexification of an unique real irreducible representation $V_{\R}$. Thus, the analysis of the complex version of $\Delta_{g,U^*}$ on $\Isot(V,\overline{V})$ reduces to the analysis of $\Isot(V_{\R})$ on its corresponding real version $\Delta_{g,U^*_{\R}}$. Because of that, we do not see any effect of the $Q_8$-group given by equation \ref{eq:Q8-group} in the real version of the operator, even in the presence of the quaternionic or complex type representations. If  $\widehat{G}$ has only representations of real type, it is irrelevant construct the $Q_8$-group since that the correspondence passage from real type representations to real $G$-modules is well-behaved and it is not demands any extra analysis.
\end{remark}

The last remark motivates us to consider the following notation:

\[
\mathbf{G} := \left\{ \begin{array}{l}
     \text{$G$, if $\widehat{G}$ has only representations of real type,}  \\
     \text{$Q_8 \times G$, if $\widehat{G}$ has any representation of complex or quaternionic type.}
\end{array} \right. 
\]

Assume that $U^*$ is endowed with an real inner product. Then, we can consider the orthogonal group $O(U^*)$. The standard representation $U^*$ of $O(U^*)$ is clearly irreducible. Similarly, for the real space $U^*_{\R}$, we have the irreducible standard representation $U^*_{\R}$ of the orthogonal group $O(U^*_{\R})$. Also, if the linear action of a group $G_j$ is irreducible over $V_j$, for $j=1,2$, then $V_1 \otimes V_2$ is an irreducible $(G_1 \times G_2)$-module, with the tensor action.

Now, we have three corollaries of Theorem \ref{th:sec3-laplacian-representative-functions} as bellow. The first one is:

\begin{corollary} \label{cor:complex-type}
Let $G$ a compact and connected Lie group endowed with left-invariant metric $g$. Consider $U^*$ a finite-dimensional vector space. For each $V \in \widehat{G}$ of complex type and $\lambda$ an eigenvalue of $\Delta_g^V$ with multiplicity $m(\lambda)$ (therefore $\lambda$ also has multiplicity $m(\lambda)$ for the operator $\Delta_g^{V^*}$), it holds the isomorphisms 
\[
\Isot(V,\overline{V})_\lambda \simeq U^*\otimes (V\oplus \overline{V})^{\oplus m(\lambda)} \simeq U^*\otimes (\H \otimes V)^{\oplus m(\lambda)} \, .
\]
Moreover, the operator $\Delta_{g,U^*}|_{\Isot(V,V^*)}$ has $(\,O(U^*) \times \mathbf{G}\,)$-simple spectrum if and only if $\Delta_g^V$ has simple spectrum.
\end{corollary}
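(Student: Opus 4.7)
The plan is to extract the eigenspace description directly from Theorem \ref{th:sec3-laplacian-representative-functions} applied separately to $\Isot(V^*)$ and $\Isot(V)$, combine them using the complex type hypothesis, and then analyze the resulting module structure under $O(U^*) \times \mathbf{G}$ to read off an irreducibility criterion.

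First I would write $\Isot(V,\overline{V})_\lambda = \Isot(V)_\lambda \oplus \Isot(\overline{V})_\lambda$, which is legitimate because $V$ is of complex type, so $V \ncong \overline{V}$ and the two isotypical summands are disjoint (see equation \ref{eq:isotipica}). By Theorem \ref{th:sec3-laplacian-representative-functions} applied to both $V$ and $\overline{V}\simeq V^*$, together with Remark \ref{remark:LaplacianV-LaplacianV*} (which ensures $\lambda$ has the same multiplicity $m(\lambda)$ as an eigenvalue of $\Delta_g^V$ and $\Delta_g^{V^*}$), I obtain
\[
\Isot(V,\overline{V})_\lambda \;\simeq\; (V_\lambda \otimes U^* \otimes \overline{V}) \,\oplus\, (\overline{V}_\lambda \otimes U^* \otimes V) \;\simeq\; U^* \otimes (V \oplus \overline{V})^{\oplus m(\lambda)},
\]
using that $\dim_{\C} V_\lambda = m(\lambda)$. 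The identification $V \oplus \overline{V} \simeq \H \otimes V$, recalled in the preliminaries, gives the second stated isomorphism.

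For the irreducibility criterion, the point is that on $\Isot(V,\overline{V})_\lambda$ the symmetries of $\Delta_{g,U^*}$ act through the orthogonal group $O(U^*)$ on the first tensor factor, through the $Q_8$ factor of $\mathbf{G}$ on the $\H$-factor (by the construction (J3) and equation \ref{eq:Q8-group}), and through $G$ on $V$. Since $\H$ is the unique $2$-dimensional complex irreducible representation of $Q_8$ (see the preliminaries on $Q_8$) and $V$ is $G$-irreducible, $\H \otimes V$ is an irreducible $(Q_8 \times G)$-module. Tensoring with the irreducible standard representation $U^*$ of $O(U^*)$ then yields that $U^* \otimes (\H \otimes V)$ is an irreducible $(O(U^*) \times \mathbf{G})$-module. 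Therefore the eigenspace $\Isot(V,\overline{V})_\lambda \simeq U^*\otimes(\H\otimes V)^{\oplus m(\lambda)}$ is $(O(U^*) \times \mathbf{G})$-irreducible if and only if $m(\lambda)=1$, and the $(O(U^*)\times\mathbf{G})$-simplicity of $\Delta_{g,U^*}|_{\Isot(V,\overline{V})}$ is equivalent to every eigenvalue of $\Delta_g^V$ being simple.

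The only subtle point I expect is verifying that the $Q_8$-action built from the quaternionic structural map $J$ on $V \oplus \overline{V}$ really coincides, under the identification $V \oplus \overline{V} \simeq \H \otimes V$, with the standard left-multiplication action of $Q_8$ on $\H$ tensored with the identity on $V$; this is what legitimates treating $\H \otimes V$ as an external tensor product of the $Q_8$-irrep $\H$ with the $G$-irrep $V$. Once this compatibility is recorded (it follows from the definition of $J$ in (J3) as the quaternionic structural map of $V \oplus \overline{V}$ and the standard $Q_8$-module structure on $\H$), the remaining arguments are routine applications of Schur's lemma and the irreducibility of external tensor products of irreducibles of distinct groups.
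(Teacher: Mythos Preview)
Your proposal is correct and is exactly the argument the paper leaves implicit: the corollary is stated without proof as an immediate consequence of Theorem~\ref{th:sec3-laplacian-representative-functions} together with the $Q_8$-constructions of Subsection~\ref{subsec:real-vs-complex}, and you have simply written out those details. One small remark on the point you flag as subtle: rather than routing the irreducibility of $V\oplus\overline{V}$ through the external-tensor-product identification $\H\otimes V$ (where one must keep track of the fact that the $j$-part of $Q_8$ acts anti-$\C$-linearly), it is slightly cleaner to argue directly that any $(Q_8\times G)$-invariant subspace is $i$-invariant, hence a complex $G$-submodule, hence one of $0,\,V,\,\overline{V},\,V\oplus\overline{V}$, and that $j$-invariance excludes the two middle options since the structural map interchanges $V$ and $\overline{V}$.
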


 We know that for $V \in \widehat{G}$ of quaternionic type, the eigenvalues of the operator $\Delta_g^V$ have even multiplicities (because the eigenspaces are $J_V$-invariant for the structural quaternionic map $J_V$). Therefore, the minimum multiplicity for each eigenvalue of $\Delta_g^V$ is $2$. Thus, we have the second corollary:

\begin{corollary}  \label{cor:quaternionic-type}
Let $G$ a compact and connected Lie group endowed with left-invariant metric $g$. Consider $U^*$ a finite-dimensional vector space. For each $V \in \widehat{G}$ of quaternionic type and $\lambda$ an eigenvalue of $\Delta_g^V$ with multiplicity $m(\lambda)$, it holds $\Isot(V,\overline{V}) = \Isot(\overline{V}) = \Isot(V) $ and the isomorphisms
\[
\Isot(V,\overline{V})_\lambda \simeq U^*\otimes V^{\oplus m(\lambda)} \simeq U^*\otimes (V\oplus \overline{V})^{\oplus m(\lambda)/2} \simeq U^*\otimes (\H \otimes V)^{\oplus m(\lambda)/2} \, .
\]
Moreover, the operator $\Delta_{g,U^*}|_{\Isot(V,V^*)}$ has $(\,O(U^*) \times \mathbf{G}\,)$-simple spectrum if and only if each eigenvalue of $\Delta_g^V$ has multiplicity $2$.
\end{corollary}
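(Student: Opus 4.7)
The plan is to parallel Corollary \ref{cor:complex-type}, adjusting for the quaternionic nature of $V$. First I would exploit the structural map $J_V\colon V \to \overline{V}$, which is a $G$-isomorphism when $V$ is quaternionic, to identify $V \simeq \overline{V} \simeq V^*$ as $G$-modules. Combined with equation \ref{eq:isotipica}, this immediately yields $\Isot(V,\overline{V}) = \Isot(V) = \Isot(\overline{V})$. Applying Theorem \ref{th:sec3-laplacian-representative-functions} to $\Isot(V^*)$, I obtain $\Isot(V^*)_\lambda \simeq U^* \otimes (V^*)^{\oplus m(\lambda)}$, and substituting $V^* \simeq V$ gives the first claimed isomorphism.

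To justify the remaining isomorphisms, I would first verify that $m(\lambda)$ is always even. By Remark \ref{remark:eigenspaces-j-invariant}, the subspace $V_\lambda$ is preserved by $J_V$, and the restriction is $\C$-antilinear with $J_V^2 = -\id$, endowing $V_\lambda$ with a quaternionic $\H$-structure; this forces $\dim_\C V_\lambda = m(\lambda)$ to be even. Then $V^{\oplus m(\lambda)} \simeq (V \oplus V)^{\oplus m(\lambda)/2} \simeq (V \oplus \overline{V})^{\oplus m(\lambda)/2} \simeq (\H \otimes V)^{\oplus m(\lambda)/2}$, where the second isomorphism uses $V \simeq \overline{V}$ and the third is the identification recalled in Subsection \ref{subsec:real-vs-complex}.

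For the $(O(U^*) \times \mathbf{G})$-simplicity claim, note that $\mathbf{G} = Q_8 \times G$ in the quaternionic setting, so the task reduces to deciding when $U^* \otimes (\H \otimes V)^{\oplus m(\lambda)/2}$ is an irreducible $(O(U^*) \times Q_8 \times G)$-module. Since $U^*$ is $O(U^*)$-irreducible as its defining representation, it suffices to check that $\H \otimes V$ is $(Q_8 \times G)$-irreducible. My approach is to observe that any $(Q_8 \times G)$-invariant subspace is a $G$-submodule of $\H \otimes V \simeq V^{\oplus 2}$, and any proper nonzero such submodule isomorphic to $V$ corresponds to a $\C$-line in the multiplicity space $\Hom_G(V, \H \otimes V) \simeq \H$; because the left $Q_8$-action by quaternion multiplication stabilizes no proper $\C$-line of $\H$ (for nonzero $h$, the line $\C h$ equals $\C(jh)$ would force $j \in \C$), no proper $(Q_8 \times G)$-invariant submodule can exist. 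Consequently $\Isot(V,\overline{V})_\lambda$ is $(O(U^*) \times \mathbf{G})$-irreducible precisely when $m(\lambda)/2 = 1$, and requiring this for every eigenvalue gives the stated equivalence.

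I expect the most delicate step to be the $(Q_8 \times G)$-irreducibility of $\H \otimes V$, because the $Q_8$-action on the paper's chosen model involves the $\C$-antilinear structural map and hence mixes $\C$-linear and $\C$-antilinear operations. Working carefully over $\R$, applying Schur's lemma to identify $\End_G(\H \otimes V) \simeq \H$, and tracking how the $Q_8$-action translates into left quaternion multiplication on this multiplicity space, should resolve this obstacle cleanly.
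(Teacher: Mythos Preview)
Your proposal is correct and follows the same line as the paper, which does not give an explicit proof but presents the corollary as an immediate consequence of Theorem \ref{th:sec3-laplacian-representative-functions}, equation \eqref{eq:isotipica}, Remark \ref{remark:eigenspaces-j-invariant}, and the observation (stated just before the corollary) that eigenvalues of $\Delta_g^V$ have even multiplicity for quaternionic $V$. Your treatment of the $(Q_8\times G)$-irreducibility of $\H\otimes V$ is more detailed than what the paper supplies; the paper simply relies on Subsection \ref{preliminaries:Q8} (where $\H$ is recorded as the unique $2$-dimensional complex irreducible $Q_8$-module) together with the tensor-product irreducibility fact stated immediately before the three corollaries.
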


Finally, for representations of real type, we have the third corollary:

\begin{corollary}  \label{cor:real-type}
Let $G$ a compact and connected Lie group endowed with left-invariant metric $g$. Consider $U^*$ a finite-dimensional vector space. For each $V \in \widehat{G}$ of real type and $\lambda$ an eigenvalue of $\Delta_g^V$ with multiplicity $m(\lambda)$, it holds $\Isot(V,\overline{V}) =\Isot(V) = \Isot(\overline{V})$ and the isomorphisms
\[
\Isot(V,V^*)_\lambda \simeq U^*\otimes V^{\oplus m(\lambda)} \simeq U^*\otimes (\,\C \otimes V^{\oplus m(\lambda)} \,) 
\]
Moreover, the operator $\Delta_{g,U^*}|_{\Isot(V,V^*)}$ has $(\,O(U^*) \times \mathbf{G}\,)$-simple spectrum if and only if $\Delta_g^V$ has simple spectrum.
\end{corollary}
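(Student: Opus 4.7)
The plan is to follow the pattern of Corollaries \ref{cor:complex-type} and \ref{cor:quaternionic-type}, using Theorem \ref{th:sec3-laplacian-representative-functions} as the main input and then reading off the simplicity criterion from the resulting irreducible decomposition.

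First, since $V$ is of real type, we have $V \simeq \overline{V}$ as $G$-modules, so equation \eqref{eq:isotipica} immediately gives $\Isot(V,\overline{V}) = \Isot(V) = \Isot(\overline{V})$, and moreover $V^* \simeq \overline{V} \simeq V$. Applying Theorem \ref{th:sec3-laplacian-representative-functions} to the isotypical component $\Isot(V^*)$ yields $\Isot(V^*)_\lambda \simeq U^* \otimes (V^*)^{\oplus m(\lambda)}$, and substituting $V^* \simeq V$ produces the asserted isomorphism $\Isot(V,V^*)_\lambda \simeq U^* \otimes V^{\oplus m(\lambda)}$. The second form $U^* \otimes (\C \otimes V^{\oplus m(\lambda)})$ then follows from the identification $V \simeq \C \otimes V_{\R}$ supplied by equation \eqref{eq:types-real} in the real-type case (interpreting the inner $V$ in the expression as its underlying real form), or simply from the trivial identification $V \simeq \C \otimes V$.

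For the $(O(U^*) \times \mathbf{G})$-simplicity equivalence, the key observation is that $U^* \otimes V$ is an irreducible $(O(U^*) \times \mathbf{G})$-module. Indeed, $O(U^*)$ acts irreducibly on $U^*$ through its standard representation, $G$ acts irreducibly on $V$, hence the outer tensor $U^* \otimes V$ is already $(O(U^*) \times G)$-irreducible. When $\mathbf{G} = Q_8 \times G$, one must additionally check that the extra $Q_8$-factor acting through $\{\pm \id, \pm i\id, \pm J, \pm iJ\}$ on $V \otimes U^* \otimes \overline{V}$ does not split $U^* \otimes V$ further; but in the real-type case $J_V^2 = \id$ and $J_V$ is $G$-equivariant, so by Schur any $J_V$-stable $G$-subspace of $V$ is either zero or all of $V$, and hence $J$ preserves each copy of $U^* \otimes V$ inside $\Isot(V,V^*)$ without refining it. Consequently, the $\lambda$-eigenspace of $\Delta_{g,U^*}|_{\Isot(V,V^*)}$, being isomorphic to $(U^* \otimes V)^{\oplus m(\lambda)}$, is $(O(U^*) \times \mathbf{G})$-irreducible if and only if $m(\lambda)=1$. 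Running this criterion over all eigenvalues $\lambda$ of $\Delta_g^V$ yields the claimed equivalence.

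The main obstacle is the structural bookkeeping around the $\mathbf{G}$-factor when $\widehat{G}$ contains representations of complex or quaternionic type: one must verify that, restricted to the real-type isotypical piece, the operator $J$ (satisfying $J^2 = \id$ there) together with $i\,\id$ really does preserve each irreducible $(O(U^*) \times G)$-summand of $\Isot(V,V^*)$ rather than producing a finer $(O(U^*) \times \mathbf{G})$-decomposition. Once this point is settled, the conclusion is a direct translation of the arguments already used for Corollaries \ref{cor:complex-type} and \ref{cor:quaternionic-type}.
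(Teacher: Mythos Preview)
Your approach is essentially the paper's intended one: the corollary is stated there without its own proof, as a direct consequence of Theorem \ref{th:sec3-laplacian-representative-functions} together with the discussion of the $Q_8$-action in Subsection \ref{subsec:real-vs-complex}, exactly as you reconstruct it.

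One point of care in your treatment of the $\mathbf{G}$-factor: your worry is phrased backwards. For the ``if'' direction ($m(\lambda)=1 \Rightarrow$ irreducible) there is nothing to check about $Q_8$, since enlarging the acting group can never split an irreducible module; $(O(U^*)\times G)$-irreducibility of $U^*\otimes V$ already forces $(O(U^*)\times\mathbf{G})$-irreducibility. The nontrivial direction is the ``only if'': when $m(\lambda)>1$ you must rule out that the $Q_8$-action glues the $m(\lambda)$ copies of $U^*\otimes V$ into a single irreducible $(O(U^*)\times\mathbf{G})$-module. Your Schur argument on $V$ does not address this, because $J$ acts on the multiplicity space $V_\lambda$, not on the $G$-factor $V^*$. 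The correct observation is that in the real-type case $J_V|_{V_\lambda}$ is a conjugate-linear involution, so $V_\lambda$ admits a real form $V_\lambda^{J_V}$; choosing any nonzero $v\in V_\lambda^{J_V}$ gives a $J$-stable complex line $\C v\subset V_\lambda$, and hence a proper $(O(U^*)\times\mathbf{G})$-submodule $\C v\otimes U^*\otimes V^*$ of the eigenspace. With this adjustment your argument is complete and matches the paper's implicit reasoning.
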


\,


\subsection{The main criterium of the section}

\,

\,

We recall that we are in the case of $K=\{e\}$ and $\m = \g$, in this whole section. According to Subsection \ref{preliminaries:recent-advances-results-literature}, we can consider the polynomial maps 
\[
a_{V_1,V_2},b_V,c_V:\sym_K(\m) \to \C \, .
\]

Now, we state the main criterium in this section.

\,

\begin{theorem}  \label{teo:cap2-grupos-de-Lie-metricas-invariantes-a-esquerda}
    A left-invariant metric $g$ on a compact and connected Lie group $G$ satisfies that the operator $\Delta_{g,U^*_{\R}}$ has a real $(O(U^*_{\R}) \times G)$-simple spectrum if and only if the operator $\Delta_{g,U^*}$ has a complex $(O(U^*) \times \mathbf{G})$-simple spectrum. Beyond that, the existence of a such metric is equivalent to the following items simultaneously satisfied:

         \begin{enumerate}
          \item For all $V',V \in \widehat{G}$, with $V' \ncong V, V^*$,  $a_{V',V}$ does not vanish identically.

          \item For all $V\in \widehat{G}$ of real or complex type, $b_{V}$ does not vanish identically.

          \item For all $V\in \widehat{G}$, of quaternionic type, $c_{V}$ does not vanish identically.
      \end{enumerate}
Moreover, such metric, when its existence holds, is generic on the space of the left-invariant metrics of $G$.        
\end{theorem}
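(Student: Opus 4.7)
I would decompose $L^2(G,U^*) = \bigoplus_{V\in\widehat{G}} \Isot(V,\overline{V})$ and note that $\Delta_{g,U^*}$ preserves each block $\Isot(V,\overline{V})$. Global $(O(U^*)\times\mathbf{G})$-simplicity therefore splits into two independent requirements: (a) every restriction $\Delta_{g,U^*}|_{\Isot(V,\overline{V})}$ is $(O(U^*)\times\mathbf{G})$-simple; and (b) distinct blocks $\Isot(V_1,\overline{V_1})$ and $\Isot(V_2,\overline{V_2})$ share no eigenvalue. Since two such blocks coincide exactly when $V_1 \simeq V_2$ or $V_1 \simeq V_2^*$, condition (b) naturally ranges over the pairs $V_1 \ncong V_2, V_2^*$ appearing in item (1).

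Condition (a) is then settled by Corollaries \ref{cor:complex-type}, \ref{cor:quaternionic-type} and \ref{cor:real-type}: for $V$ of real or complex type it is equivalent to $\Delta_g^V$ having simple spectrum, and for $V$ of quaternionic type it is equivalent to every eigenvalue of $\Delta_g^V$ having multiplicity exactly $2$ (this is the minimum possible, being forced by $J_V$-invariance of the eigenspaces). Writing $p_V(\kappa)$ for the characteristic polynomial of $\Delta_g^V = \Delta_\kappa^V$ (recall that $V^K = V$ since $K=\{e\}$ here), simple spectrum translates to $b_V(\kappa) = \res(p_V,p_V')(\kappa) \ne 0$, while the ``all multiplicities equal $2$'' condition translates to $c_V(\kappa) = \res(p_V,p_V'')(\kappa) \ne 0$: indeed, a polynomial whose roots all have even multiplicity has the form $q^2$, and $q^2$ has all roots of multiplicity exactly $2$ iff $q$ is separable iff $q^2$ and $(q^2)''=2(q')^2+2qq''$ are coprime. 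Condition (b) is simply that $p_{V_1}(\kappa)$ and $p_{V_2}(\kappa)$ share no common root, i.e.\ $a_{V_1,V_2}(\kappa) \ne 0$.

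Existence of a metric with the stated simplicity thus reduces to the simultaneous non-vanishing at some $\kappa \in \sym_K^+(\m)$ of the countable family of polynomial maps $\{a_{V_1,V_2}\}, \{b_V\}, \{c_V\}$ on $\sym_K(\m)$. Since $\sym_K^+(\m)$ is a non-empty open subset of the finite-dimensional vector space $\sym_K(\m)$, a polynomial map vanishes on all of $\sym_K^+(\m)$ iff it is identically zero; hence this simultaneous solvability is equivalent to each individual polynomial being non-identically-zero, which is exactly conditions (1)--(3). Genericity then follows at once: each non-vanishing locus is open and dense in $\sym_K^+(\m)$ (complement of a proper real-algebraic subset), and $\widehat{G}$ being countable, their intersection is residual. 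The real/complex equivalence is the content of Subsection \ref{subsec:real-vs-complex} and Remark \ref{remark:translating-G-properties-to-Q8xG-properties}: for $V$ of complex or quaternionic type the complex block $\Isot(V,\overline{V})$ is the complexification of the real block $\Isot(V_\R)$, and the $Q_8$-factor in $\mathbf{G}$ exactly encodes the structural maps already intrinsic on the real side, so the two simplicity notions match. I expect the main technical point to be the quaternionic case, where one must carefully verify the ``multiplicity exactly $2$ iff $c_V \ne 0$'' equivalence via the second-derivative identity above and the prior input that $J_V$-invariance already forces even multiplicities.
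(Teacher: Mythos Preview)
Your proposal is correct and follows essentially the same route as the paper's proof: decompose $L^2(G,U^*)$ into the blocks $\Isot(V,\overline{V})$, reduce simplicity within each block via Corollaries \ref{cor:real-type}, \ref{cor:complex-type}, \ref{cor:quaternionic-type} to the multiplicity conditions on $\Delta_g^V$, translate these together with the no-common-eigenvalue condition between distinct blocks into non-vanishing of the resultants $a_{V_1,V_2}$, $b_V$, $c_V$, and conclude genericity by a Baire argument over the countable family $\widehat G$. Your explicit verification of the equivalence ``all multiplicities equal $2$'' $\Leftrightarrow$ $c_V\ne 0$ (writing $p_V=q^2$ and using $p_V''=2(q')^2+2qq''$) is in fact a detail that the paper simply asserts without justification.
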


\begin{proof} \,

    \vspace{0.5cm}
    \emph{First part: $\Delta_{g,U^*_{\R}}$ is $(O(U^*_{\R}) \times G)$-simple iff $\Delta_{g,U^*}$ is $(O(U^*) \times \mathbf{G})$-simple.}
    \vspace{0.2cm}

    Let $g$ a left-invariant metric on $G$. Take $\lambda$ as an eigenvalue of $\Delta_g^V$ with multiplicity $m(\lambda)$. By Subsection \ref{subsec:real-vs-complex}, we have

    \begin{equation*} \tag{I}
    \Isot(V,\overline{V})_{\lambda} \simeq \left\{ \begin{array}{l}
     \text{$U^* \otimes V^{\oplus m(\lambda)} \simeq U^* \otimes (\C \otimes V^{\oplus m(\lambda)})$, if $V$ is of real type,} \\
     \text{$U^* \otimes(\H\otimes V)^{\oplus m(\lambda)}$, if $V$ is of complex type,} \\ \text{$U^* \otimes(\H\otimes V)^{\oplus \,m(\lambda)/2}$, if $V$ is of quaternionic type}
\end{array} \right.  
\end{equation*}
(note that $(\H \otimes V)^{\oplus\, m(\lambda)/2} \simeq V^{\oplus m(\lambda)}$ for $V$ of quaternionic type).

Take $V_{\R}$, as in equation \ref{eq:types-real}, an irreducible real representation such that  
\begin{equation*}
    \C \otimes V_{\R} \simeq \left\{ \begin{array}{l}
     \text{$V$, if $V$ is of real type,} \\
     \text{$\H\otimes V$, if $V$ is of complex or quaternionic type.}
\end{array} \right.
\end{equation*}
Thus,
\begin{equation*}
    \Isot(V,\overline{V})_\lambda \simeq \left\{ \begin{array}{l}
     \text{$U^* \otimes (\C \otimes V_{\R})^{\oplus m(\lambda)}$, if $V$ is of real type,} \\
     \text{$U^* \otimes(\C \otimes V_{\R})^{\oplus m(\lambda)}$, if $V$ is of complex type,} \\ \text{$U^* \otimes(\C \otimes V_{\R})^{\oplus \,m(\lambda)/2}$, if $V$ is of quaternionic type,}
\end{array} \right. 
\end{equation*}
or equivalently
\begin{equation*}
    \Isot(V,\overline{V})_\lambda \simeq \left\{ \begin{array}{l}
     \text{$(\C \otimes V_{\R})^{\oplus m(\lambda) \dim_{\C} U^*}$, if $V$ is of real type,} \\
     \text{$(\C \otimes V_{\R})^{\oplus m(\lambda)\dim_{\C} U^*}$, if $V$ is of complex type,} \\ \text{$(\C \otimes V_{\R})^{\oplus \,(m(\lambda)/2)\dim_{\C} U^*}$, if $V$ is of quaternionic type.}
\end{array} \right. 
\end{equation*}

We know that $U^* = \C \otimes U^*_{\R}$. Since $\dim_{\C} U^* = \dim_{\R} U^*_{\R}$, then the $\lambda$-eigenspace of $\Delta_{g,U^*_{\R}}$, restricted to $\Isot(V,\overline{V})\cap C^\infty(G,K;U^*_{\R})$ is given by

\begin{equation*}
     \Isot(V_{\R})_\lambda \simeq \left\{ \begin{array}{l}
     \text{$V_{\R}^{\oplus m(\lambda)\dim_{\R} U^*_{\R}}$, if $V$ is of real type,} \\
     \text{$V_{\R}^{\oplus m(\lambda)\dim_{\R} U^*_{\R}}$, if $V$ is of complex type,} \\ \text{$ V_{\R}^{\oplus \,(m(\lambda)/2)\dim_{\R} U^*_{\R}}$, if $V$ is of quaternionic type,}
\end{array} \right. 
\end{equation*}
or equivalently
\begin{equation*}   \tag{II}
    \Isot(V_{\R})_\lambda \simeq \left\{ \begin{array}{l}
     \text{$U^*_{\R} \otimes_{\R} V_{\R}^{\oplus m(\lambda)}$, if $V$ is of real type,} \\
     \text{$U^*_{\R} \otimes_{\R} V_{\R}^{\oplus m(\lambda)}$, if $V$ is of complex type,} \\ \text{$U^*_{\R} \otimes_{\R} V_{\R}^{\oplus \,m(\lambda)/2}$, if $V$ is of quaternionic type.}
\end{array} \right. 
\end{equation*}

By (I) and (II), if $V$ is of real or complex type, then $\Delta_{g,U^*}|_{\Isot(V,\overline{V})}$ is complex $(\,O(U^*)\times \mathbf{G}\,)$-simple if and only if $\Delta_g^V$ has simple spectrum if and only if $\Delta_{g,U^*_{\R}} |_{\Isot(V_{\R})}$ is real $(O(U^*_{\R})\times G)$-simple. Similarly, if $V$ is of quaternionic type, then $\Delta_{g,U^*}|_{\Isot(V,\overline{V})}$ is complex $(\,O(U^*)\times \mathbf{G}\,)$-simple if and only if each eigenvalue of $\Delta_g^V$ has multiplicity $2$ if and only if $\Delta_{g,U^*_{\R}} |_{\Isot(V_{\R})}$ is real $(O(U^*_{\R})\times G)$-simple. By collecting all isotypical components together, we conclude that $\Delta_{g,U^*} $ is complex $(\,O(U^*)\times \mathbf{G}\,)$-simple if and only if $\Delta_{g,U^*_{\R}}$ is real $(O(U^*_{\R})\times G)$-simple.

\vspace{0.5cm}
    \emph{Second part: There exists a left-invariant metric $g$ such that the operator $\Delta_{g,U^*}$ is $(O(U^*) \times \mathbf{G})$-simple iff it holds (1), (2) and (3). Moreover, this existence is generic in the space of left-invariant metrics}
    \vspace{0.2cm}

$(\Rightarrow)$ Assume that $g$ is a left-invariant metric such that the operator $\Delta_{g,U^*}$ is $(O(U^*) \times \mathbf{G})$-simple. By following the first part, we conclude that for any $V^* \in \widehat{G}$ of real or complex type, the operator $\Delta_g^V$ has simple spectrum, that is, $b_V(g) \neq 0$. Similarly, for any $V^* \in \widehat{G}$ of quaternionic type, each eigenvalue of $\Delta_g^V$ has multiplicity $2$, meaning that $c_V(g) \neq 0$. Now suppose that there are $V,V' \in \widehat{G}$, with $V' \ncong V,V^*$ such that $a_{V',V}(g) = 0$. Then, there is some common eigenvalue $\lambda$ for the operators $\Delta_g^{V}$ and $\Delta_g^{V'}$. Therefore, the $\lambda$-eigenspace of $\Delta_{g,U^*}$ is a direct sum of at least two $(O(U^*) \times \mathbf{G})$-modules (one from $\Delta_g|_{\Isot(V,V^*)}$ and other from $\Delta_g|_{\Isot(V',{V'}^*)}$), which contradicts the $(O(U^*) \times \mathbf{G})$-simplicity of $\Delta_{g,U^*}$. Thus, $a_{V',V}(g) \neq 0$. So, it holds (1), (2) and (3).

$(\Leftarrow)$ Assume (1), (2) and (3). Let $V, V' \in \widehat{G}$ such that $V' \ncong V,V^*$. Since the polynomial $a_{V',V}$ does not vanishes identically on $\sym(\g)$, then it not vanishes identically on its open subset $\sym^+(\g)$. Let $Z_1$ the union of the zeros of all the maps $a_{V',V}|_{\sym^+(\g)}$, with irreducible $G$-modules $V' \ncong V,V^*$. Since $Z_1$ is a family of countable zeros of polynomials defined on the open set $\sym^+(\g)$, then $Z_1$ is a meager set on $\sym^+(\g)$. Similarly, we can define the meager set $Z_2 \subset \sym^+(\g)$ as the countable union of zeros of all the maps $b_V$, with $V \in \widehat{G}$ of real or complex type, and the meager set $Z_3 \subset \sym^+(\g)$ as the countable union of zeros of all the maps $c_V$, with $V \in \widehat{G}$ of quaternionic type. Therefore, $Z_1 \cup Z_2 \cup Z_3$ is a meager subset of $\sym^+(\g)$ or, equivalently, $\sym^+(\g) - (Z_1 \cup Z_2 \cup Z_3)$ is a residual subset of $\sym^+(\g)$. Note that, for any generic metric $g \in \sym^+(\g) - (Z_1 \cup Z_2 \cup Z_3)$, we have by construction: 
    \begin{enumerate}
        \item $a_{V,V'}(g) \neq 0$ for $V, V' \in \widehat{G}$ such that $V' \ncong V,V^*$,

        \item $b_V(g) \neq 0$, for any $V \in \widehat{G}$ of real or complex type,

        \item $c_V(g) \neq 0$, for any $V \in \widehat{G}$ of quaternionic type.
    \end{enumerate}
 They imply that
 \begin{enumerate}
        \item[(i)] $\Delta_g^{V'}$ and $\Delta_g^{V}$ do not share any common eigenvalue for $V, V' \in \widehat{G}$ such that $V' \ncong V,V^*$,

        \item[(ii)] $\Delta_g^{V}$ has simple spectrum for any $V \in \widehat{G}$ of real or complex type, 

        \item[(iii)] each eigenvalue of $\Delta_g^{V}$ has multiplicity $2$ for any $V \in \widehat{G}$ of quaternionic type.
    \end{enumerate}
 These three conditions collected up imply that $\Delta_{g,U^*}$ has a $(O(U^*) \times \mathbf{G})$-simple spectrum, by the first part of this proof.
\end{proof}

\begin{example} \label{ex:examples-Schueth}
Let $G = (SU(2) \times \cdots \times SU(2) \times T^n)/ \Gamma$, where $\Gamma$ is a discrete central subgroup. By Theorem \ref{teo:criterio-metrica-G-simples-generica} and Example \ref{ex:Schueth-SU2}, the items (1), (2) and (3) in the Theorem \ref{teo:cap2-grupos-de-Lie-metricas-invariantes-a-esquerda} are simultaneously satisfied. Then we conclude that $\Delta_{g,U^*_{\R}}$ and $\Delta_{g,U^*}$ has real $(O(U^*_{\R})\times G)$-simple and complex $(O(U^*)\times \mathbf{G})$-simple spectrum, respectively. In particular,

    \begin{enumerate}

        \item[(a)] Let $U^*_{\R} = \R$ and $U^* = \C$. The real version of the Laplace-Beltrami operator $\Delta_{g,\R}$ has a real $G$-simple spectrum for a generic left-invariant metric. Similarly, the complex version of the Laplace-Beltrami operator $\Delta_{g,\C}$ has a complex $\mathbf{G}$-simple spectrum for a generic left-invariant metric.  
        
        \item[(b)] Let $U^*_{\R} = \wedge^p \g^*$ and $U^* = \wedge^p \g^{*\C}$. We know that for bi-invariant metrics $g_0$, $\Delta_{g_0,U^*}$ is the Hodge-Laplacian on complex $p$-forms. For a generic left-invariant metric $g$,  $\Delta_{g,U^*}$ has a complex $(O(U^*)\times \mathbf{G})$-simple spectrum. Similarly, for a generic left-invariant metric $g$,  $\Delta_{g,U^*_{\R}}$ has a real $(O(U^*_{\R})\times G)$-simple spectrum. The same argument applies for the case in which we take $U^*_{\R} = \bigoplus\limits_p \wedge^p \g^*$ and $U^* = \bigoplus\limits_p \wedge^p \g^{*\C}$.
        
    \end{enumerate}
   
\end{example}

\,


\section{Generalized Laplacians on normal homogeneous spaces} \label{sec:generalized-laplacians-normal-homogeneous-spaces}

\,

\,

Let $g$ a $(G\times G)$-invariant metric on $G$ that corresponds to a $G$-invariant normal metric on a compact and connected homogeneous space $M=G/K$. Consider  $\{Y_j \}$ a $g$-orthonormal basis and $(\tau^*,U^*)$ a complex $K$-representation with real form $U^*_{\R}$.

For $(M,g)$, we can consider the operator
\[
\Delta_{g,U^*}:= -\sum\limits_{j} L_*(Y_j)^2 :C^\infty(G,K;U^*) \to C^\infty(G,K;U^*) \, ,
\] 
where 
\[
C^\infty(G,K;U^*) := \{ f \in C^{\infty}(G,U^*) \; | \; \forall x \in G, \, \forall k\in K, f(xk)=\tau^*(k^{-1})f(x)   \} \, 
\]
and $L_*:\U( \g^{\C}) \to \End(C^\infty(G,K;U^*))$ is induced by the left regular representation. We also assume that $U^*$ has a real form $U^*_{\R}$ and we define
\[
\Delta_{g,U^*_{\R}}:= -\sum\limits_{j} L_*(Y_j)^2 :C^\infty(G,K;U^*_{\R}) \to C^\infty(G,K;U^*_{\R}) \, .
\] 

Of course, if we take $U^*_{\R} = \R$ and $U^* = \C$, then $\Delta_{g,U^*_{\R}}$ and $\Delta_{g,U^*}$  are the real and the complex version of the Laplace-Beltrami operator, respectively. 

The pair $(G,K)$ induces the splitting $\g = \k \oplus \m$, where $\m$ is the isotropy representation. If we take the complexified $p$-exterior representation $U^*= \wedge^p \m^{*\C}$, then $\Delta_{g,U^*}$ coincides with the Hodge-Laplacian of $M$ acting on the differential complex $p$-forms of $M$ (up to identifications).

By the isomorphisms in equation \ref{eq:isomorphisms-isotypical-general-decomposition}, we can  identify each isotypical component $\Isot(V^*)$ of $L^2(G,K;U^*)$ as $(V \otimes U^*)^K\otimes V^*$. We want to study the eigenspaces of $\Delta_{g,U^*}$ restricted to each of these isotypical components, under the precedent identification, as it follows in the next theorem: 

\,

\begin{theorem} \label{th:sec4-laplacian-representative-functions-normal-homogeneous-spaces} Let $M=G/K$ a compact and connected homogeneous space, endowed with a normal metric $g$ which corresponds to a $(G\times G)$-invariant metric on $G$. Then, the generalized Laplace operator
$\Delta_{g,U^*}:L^2(G,K;U^*) \to L^2(G,K;U^*)$ can be identified as the operator
\[
\bigoplus_{V^* \in \widehat{G}_{K,U^*}} (\id \otimes \id)\otimes \Delta_g^{V^*}  : \bigoplus_{V^* \in \widehat{G}_{K,U^*}} (V \otimes U^*)^K \otimes V^* \to \bigoplus_{V^* \in \widehat{G}_{K,U^*}} (V \otimes U^*)^K \otimes V^*  \; 
\]
or the operator
\[
\bigoplus_{V^* \in \widehat{G}_{K,U^*}} (\Delta_g^V \otimes \id)\otimes\id : \bigoplus_{V^* \in \widehat{G}_{K,U^*}} (V \otimes U^*)^K \otimes V^* \to \bigoplus_{V^* \in \widehat{G}_{K,U^*}} (V \otimes U^*)^K \otimes V^*  \; . 
\]

In particular, if $\lambda_V$ is the Casimir eigenvalue of $\Delta_g^V$, then the $\lambda_V$-eigenspace $\Isot(V^*)_{\lambda_V}$ of the restricted operator $\Delta_{g,U^*}|_{\Isot(V^*)_{\lambda_V}}$ satisfies
\[
\Isot(V^*)_{\lambda_V} \simeq (V \otimes U^*)^K\otimes V^* \simeq (V^*)^{\oplus \dim_{\C} (V\otimes U^*)^K } \, .
\]
\end{theorem}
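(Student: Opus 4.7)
The approach is parallel to the proof of Theorem \ref{th:sec3-laplacian-representative-functions}: I work on each isotypical component separately, using the explicit isomorphism $\varphi_{V^*}$ of (\ref{eq:isomorphisms-isotypical-general-decomposition}) to transfer the action of $\Delta_{g,U^*}$ to $(V\otimes U^*)^K\otimes V^*$. Two differences from Section \ref{sec:generalized-laplacians} drive the two identifications: the quadratic element is now acted on via the \emph{left} regular representation $L_*$, and the metric $g$ is now bi-invariant, so $\sum_j Y_j^2$ is central in $\U(\g^{\C})$.

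First I would establish the identification with $(\id\otimes\id)\otimes \Delta_g^{V^*}$ by a direct computation on the generating functions $\phi_{v,\omega,\xi}(x) = [\xi(\Pi(x)v)]\omega$. Differentiating the left translation gives
\[
L_*(Y_j)\phi_{v,\omega,\xi}(x) \;=\; \frac{d}{dt}\bigg|_{t=0} [\xi(\Pi(\exp(-tY_j))\Pi(x)v)]\omega \;=\; -[\xi(\Pi_*(Y_j)\Pi(x)v)]\omega.
\]
Using the defining relation of the dual representation, $-\xi\circ \Pi_*(Y_j) = \Pi^*_*(Y_j)\xi$, this rewrites as $\phi_{v,\omega,\Pi^*_*(Y_j)\xi}(x)$. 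Iterating and summing over $j$ yields
\[
\Delta_{g,U^*}\,\phi_{v,\omega,\xi} \;=\; \phi_{v,\omega,\Delta_g^{V^*}\xi} \;=\; \varphi_{V^*}\!\bigl((v\otimes \omega)\otimes \Delta_g^{V^*}\xi\bigr),
\]
which is precisely the first claimed identification on the dense span of the $\phi_{v,\omega,\xi}$ inside each isotypical component, and hence on all of $\Isot(V^*)$ by linearity and continuity.

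Next, I would deduce the second identification from the first using bi-invariance. Because $g$ is bi-invariant, $\sum_j Y_j^2$ is central in $\U(\g^{\C})$, so by Schur's lemma — equivalently, by Freudenthal's formula (\ref{eq:Freudenthal-formula}) — the Casimir operator $\Delta_g^V$ acts on the irreducible module $V$ by the scalar $\lambda_V$. Remark \ref{remark:LaplacianV-LaplacianV*} then gives $\Delta_g^{V^*}\xi = \xi\circ \Delta_g^V = \lambda_V\,\xi$, so $\Delta_g^{V^*}=\lambda_V\,\id_{V^*}$ as well. Consequently both $(\id\otimes \id)\otimes \Delta_g^{V^*}$ and $(\Delta_g^V\otimes \id)\otimes \id$ reduce to multiplication by $\lambda_V$ on $(V\otimes U^*)^K\otimes V^*$, and thus coincide there with $\Delta_{g,U^*}|_{\Isot(V^*)}$.

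The eigenspace statement is then immediate: since $\Delta_{g,U^*}$ acts as the single scalar $\lambda_V$ on the entire isotypical component, $\Isot(V^*)_{\lambda_V} = \Isot(V^*) \simeq (V\otimes U^*)^K \otimes V^*$, and as a $G$-module under $L$ the full $G$-structure is carried by the $V^*$-factor (the multiplicity space $(V\otimes U^*)^K$ is inert), giving $(V^*)^{\oplus \dim_{\C}(V\otimes U^*)^K}$. The only subtle point is the sign bookkeeping when passing derivatives of left translations through to the dual representation; this is entirely routine, so I do not expect any genuine obstacle beyond being careful with conventions.
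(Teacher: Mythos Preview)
Your proof is correct and follows essentially the same route as the paper's: both compute $\Delta_{g,U^*}\phi_{v,\omega,\xi}$ directly from the left-regular action to obtain $\phi_{v,\omega,\Delta_g^{V^*}\xi}$, and both then invoke that the Casimir acts as the scalar $\lambda_V$ on $V$ (and hence on $V^*$) to identify this with $\phi_{\Delta_g^V v,\omega,\xi}$. The only cosmetic differences are that you differentiate once and iterate (using $\Pi^*_*(Y_j)\xi=-\xi\circ\Pi_*(Y_j)$ explicitly) while the paper differentiates twice in one line, and that your second identification is phrased via Schur's lemma rather than the paper's direct chain of equalities; neither difference is substantive.
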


\begin{proof}
Let $(v\otimes \omega)\otimes \xi \in (V\otimes U^*)\otimes V^* \simeq \Isot(V^*)$ and recall the isomorphism \ref{eq:isomorphisms-isotypical-general-decomposition}. For any $x \in G$,
\[
    \begin{array}{ccl}
       \Delta_{g,U^*} \, \phi_{v,\omega,\xi}\,(x)  &=& -\sum\limits_j  [L_*(Y_j)^2 \, \phi_{v,\omega,\xi}](x)  \\
         &=& -\sum\limits_j \left. \frac{d^2}{dtds}\right|_{s,t=0} \phi_{v,\omega,\xi} (\, \exp(-(t+s)Y_j)x \,) \\
         &=& -\sum\limits_j \left. \frac{d^2}{dtds}\right|_{s,t=0} [\,\xi(\, \Pi(\exp(-(t+s)Y_j)) \Pi(x)v \,)\,]\omega  \\
         &=& [\,\xi(\, \Delta_g^V(\Pi(x)v) \,)\,]\omega  \\
         &=& [\,\xi \circ \Delta_g^V(\, \Pi(x)v \,)\,]\omega\\
         &=& [\,(\Delta_g^{V^*} \xi)(\, \Pi(x)v \,)\,]\omega  \\
          &=& [\, \Pi^*(x)^{-1} \, (\Delta_g^{V^*}\xi)\, (v) \,]\omega \\
          &=& \phi_{v,\omega, (\Delta_g^{V^*}\xi)}\,(x) \, .
    \end{array}
\]
This proves the first identification.

Now, we know that $\Delta_g^V$ is a multiple of the identity and it commutes with $\Pi(x)$ for all $x \in G$. Also, we know that $\xi \circ \Delta_g^V = \Delta_g^{V^*} \xi$. Therefore,
       \[
    \begin{array}{ccl}
      \phi_{(\Delta_g^V v), \omega, \xi}\,(x) &=& [\, \xi (\Pi(x)\Delta_g^V v)\,]\omega  \\
       &=& [\, \xi (\Delta_g^V \Pi(x)v)\,]\omega  \\ &=& [\, \xi \circ \Delta_g^V ( \Pi(x)v)\,]\omega  \\  
       &=& [\, \Delta_g^{V^*}\xi ( \Pi(x)v)\,]\omega  \\ 
       &=&  \phi_{v,\omega, (\Delta_g^{V^*}\xi)} (x)  \\  
       &=&   \Delta_{g,U^*} \, \phi_{v,\omega,\xi}\,(x) \, . \\ 
    \end{array}
    \]
This proves the second identification.
\end{proof}

Before we continue, we need to study some algebraic properties on Casimir elements and the root systems.

\,

\,

\subsection{Representations with the same Casimir eigenvalue} \label{subsec:Root-systems-and-representations-with-the-same-Casimir-eigenvalue}

\,

\,
Let $(G,K)$ a pair of Lie groups such that $M=G/K$ is a compact and connected normal homogeneous space. The manifold $M$ is endowed with a normal metric $g$ corresponding to a bi-invariant metric on $G$.

Petrecca and Röser (2018, \cite{Petrecca2019}) had established that the Laplace-Beltrami operator on symmetric spaces with rank $\geq 2$ do not satisfied the irreducibility criterium. Thus, a generic panorama is still an open problem for these spaces. There is a reason behind it and the argument is essentially the same for normal homogeneous spaces and operators in the form $\Delta_{g,U^*}$.
We will show that for a normal homogeneous space, there are some algebraic structural symmetries for the eigenspaces of $\Delta_{g,U^*}$ which relate irreducible representations with the same Casimir eigenvalue. These structural symmetries are implicitly encoded by the Freudenthal's formula \ref{eq:Freudenthal-formula} and basic properties of the root systems.  So, after we take in account all the geometric and structural symmetries, we will be able to show a generic situation for $\Delta_{g,U^*}$.

\,

\begin{remark} \label{remark:representations-with-the-same-casimir-eigenvalue}
 By Remark \ref{remark:same-casimir-eigenvalue}, we note that all the representations in $\widehat{G}$ with Casimir eigenvalue $a^2-g(\delta,\delta)$ are completely described by the set
\[
S(a):= \{ \mu \in  \Gamma_G^* \; | \; V^\mu \in \widehat{G} \; \text{e} \; a_\mu = a  \} = \S_a(-\delta) \cap  \Gamma_G^* \; .
\]
Even the elements $\mu \in S(a)-C$ describes some representation $V^{w_\mu \cdot \mu} \in \widehat{G}$, where $w_\mu$ is an element of the Weyl group $W$ which sends $\mu$ to the Weyl chamber $C$.
\end{remark}

\,

Due to the sphere $\S_a(-\delta)$, which contains the highest weights associated to the same Casimir eigenvalue as analyzed in Remarks \ref{remark:same-casimir-eigenvalue} and \ref{remark:representations-with-the-same-casimir-eigenvalue}, we want to change the origin from $0$ to $-\delta$ in our root system. That is, we want to put a copy of the root system on a new origin $-\delta$ in a way which is similar to the process of putting a tangent space on an arbitrary point of a smooth manifold.  We will do that in the next subsection.

\,

\,

 \subsection{The linear structure on $-\delta$} \label{subsec:The-linear-structure-on--delta}

Let $\widetilde{\a}$ equals to $\a$ as sets and consider the vector operations of sum, scalar multiplication and inner product $\big(\; +\, , \odot\, , \, \llangle \cdot \, , \, \cdot \, , \, \rrangle \; \big)$ over $\widetilde{\a}$, respectively, defined by
 \[
\forall H,H' \in \widetilde{\a} , \, \forall c \in \R, \;  \left\{ \begin{array}{l}
    (H-\delta) \oplus (H'-\delta) := (H-H')-\delta  \, , \\
    c \odot (H-\delta) :=   (cH)-\delta \, , \\
    \llangle H-\delta , H'-\delta \rrangle := \langle H, H' \rangle \, .
\end{array} \right.
\]

\begin{remark} \label{remark:linear-isometry-delta-deslocada}
    Let $T_{-\delta}:\a \to \widetilde{\a}$ the translation map $H \mapsto H-\delta$. One can easily check that $T_{-\delta}$ is a linear isometry with inverse $T_{\delta}:\widetilde{\a} \to \a$.
\end{remark}

Remark \ref{remark:linear-isometry-delta-deslocada} give us a way to move for $\a$ to $\widetilde{\a}$ (and vice-versa). There are other induced relations between these vector spaces. For example, every $\varphi \in \End(\a)$ induces a map
\[
\widetilde{\varphi}: \widetilde{\a}\ni (H-\delta) \mapsto (\varphi(H)-\delta) \in \tilde{\a} 
\]
If we set 
\[
\widetilde{\End(\a)}:= \{ \widetilde{\varphi} \, ; \; \varphi \in \End(\a) \} \; , 
\]
then $\widetilde{\End(\a)} = \End(\widetilde{\a})$ and this ring is isomorphic to the ring $\End(\a)$. Similarly, if we set 
\[
\widetilde{O(\a)}:= \{ \widetilde{\varphi} \, ; \; \varphi \in O(\a) \} \; , 
\]
then $\widetilde{O(\a)} = O(\widetilde{\a})$.

Recall Remark \ref{remark:representations-with-the-same-casimir-eigenvalue} and define
\[
O(\widetilde{\a})_{S(a)}:= \{ \widetilde{\varphi} \in O(\widetilde{\a}) ; \; \widetilde{\varphi}(S(a)) \subset S(a) \} \; .
\]
Since $S(a)$ is a finite set and each map in $O(\widetilde{\a})$ is one-to-one, then we have the equality

\begin{equation*}
    O(\widetilde{\a})_{S(a)} = \{ \widetilde{\varphi} \in O(\widetilde{\a}) ; \; \widetilde{\varphi}(S(a)) = S(a) \} \; 
\end{equation*}
and $O(\widetilde{\a})_{S(a)}$ is a subgroup of $O(\widetilde{\a})$.

\,

\,

 \subsubsection{ The action of combinatorial groups of hidden symmetries in the root system} \label{subsubsec:structure-on--delta}
 
\,

\,

\begin{theorem} \label{th:transitive-action}
 Consider the precedent constructions for the root system associated to the pair $(G,K)$, and let $a>0$ such that $\lambda := a^2 - (\delta, \delta)$ is a Casimir eigenvalue. The group  $O(\widetilde{\a})_{S(a)}$ is a finite subgroup of $O(\widetilde{\a})$ and it acts transitively on $S(a)$. 
 \end{theorem}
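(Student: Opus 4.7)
The plan is to prove finiteness and transitivity separately.

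For finiteness, I will consider the restriction homomorphism
\[
\rho \colon O(\widetilde{\a})_{S(a)} \longrightarrow \Sym(S(a)), \qquad \widetilde{\varphi} \longmapsto \widetilde{\varphi}|_{S(a)}.
\]
Since $S(a) = \Gamma_G^* \cap \S_a(-\delta)$ is finite (intersection of a discrete lattice with a compact sphere), $\Sym(S(a))$ is finite, so it suffices to show $\rho$ is injective. Any element of $\ker \rho$ is an orthogonal map of $\widetilde{\a}$ fixing every point of $S(a)$, and as soon as $S(a)$ contains a basis of $\widetilde{\a}$ such a map must be the identity. The requisite spanning property can be extracted from the irreducibility of the root system together with the observation that the dot action of the Weyl group on $\widetilde{\a}$ preserves $S(a)$ and acts irreducibly on $\widetilde{\a}$; no proper subspace can contain a $\widetilde{W}$-invariant configuration meeting every orbit of the dot action on the $\widetilde{\a}$-sphere.

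For transitivity, fix $\mu, \eta \in S(a)$ with $\mu \neq \eta$ (the case $\mu = \eta$ is trivial). The natural candidate element of $O(\widetilde{\a})_{S(a)}$ sending $\mu$ to $\eta$ is the reflection $\widetilde{\sigma}_{\mu-\eta}$ of $\widetilde{\a}$ through the hyperplane orthogonal to $\mu - \eta$ passing through the origin of $\widetilde{\a}$. Using the equality $\llangle \mu, \mu \rrangle = \llangle \eta, \eta \rrangle = a^2$ in $\widetilde{\a}$, the standard Euclidean reflection formula immediately yields $\widetilde{\sigma}_{\mu - \eta}(\mu) = \eta$. Being orthogonal, $\widetilde{\sigma}_{\mu - \eta}$ automatically sends the $\widetilde{\a}$-sphere of radius $a$ to itself, so the spherical part of the membership condition defining $S(a)$ is preserved.

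The heart of the argument is then to verify the remaining lattice condition: that $\widetilde{\sigma}_{\mu - \eta}(\nu) \in \Gamma_G^*$ for every $\nu \in S(a)$. Translating back to $\a$ via the isometry $T_\delta$ of Remark \ref{remark:linear-isometry-delta-deslocada}, this amounts to showing
\[
\frac{2\,\langle \nu + \delta,\, \mu - \eta \rangle}{\langle \mu - \eta,\, \mu - \eta \rangle}\,(\mu - \eta) \in \Gamma_G^*.
\]
When $\mu - \eta$ happens to be proportional to a root $\alpha$, the coefficient reduces to $\langle \nu + \delta, \alpha^\vee \rangle \in \Z$ on the weight lattice by the classical coroot integrality, and the claim is immediate. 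The general case is the main obstacle: it requires exploiting, beyond this integrality, the joint geometric constraint that all three of $\mu, \eta, \nu$ lie on the common sphere $\S_a(-\delta)$, together with the arithmetic of $\Gamma_G^*$ relative to the coroot lattice $\I$. This lattice-geometric verification, which controls how shifted-sphere intersections of the weight lattice interact with non-root orthogonal directions, is the step I expect to require the most care and constitutes the crux of the theorem.
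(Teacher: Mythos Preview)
Your finiteness argument is the paper's: both use that the $\widetilde{W}$-orbit of any point of $S(a)$ spans $\widetilde{\a}$ (irreducibility of the Weyl action), so $S(a)$ contains a basis and the restriction $O(\widetilde{\a})_{S(a)}\to\Sym(S(a))$ is injective.

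For transitivity your route is genuinely different from the paper's, and it does not work. The step you label ``the crux'' --- showing that $\widetilde{\sigma}_{\mu-\eta}$ preserves $S(a)$ --- is not a matter of care; it is false in general. Take $G=SU(3)$ (type $A_2$), write weights as $m_1\omega_1+m_2\omega_2$, and set $p=m_1+1$, $q=m_2+1$, so that $a_\mu^2$ is proportional to $p^2+pq+q^2$. For the value $91$ one has both $(p,q)=(1,9)$ and $(p,q)=(5,6)$, giving $\mu=8\omega_2$ and $\eta=4\omega_1+5\omega_2$ in $S(a)$. Then $\mu-\eta=-4\omega_1+3\omega_2$ is not proportional to any root, and applying your reflection to the point $\nu=8\omega_1\in S(a)$ (i.e.\ $(p,q)=(9,1)$) one computes
\[
\sigma_{\mu-\eta}(\nu+\delta)=-\tfrac{55}{13}\,\omega_1+\tfrac{142}{13}\,\omega_2,
\]
which is not in $\Gamma_G^*$. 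Hence $\widetilde{\sigma}_{\mu-\eta}\notin O(\widetilde{\a})_{S(a)}$. The obstruction is structural: a Euclidean reflection preserves the weight lattice only along very special directions (those coming from the extended automorphism group of the root system), and $\mu-\eta$ has no reason to point in such a direction.

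The paper does not attempt to name an explicit element of $O(\widetilde{\a})_{S(a)}$ carrying $\mu$ to $\eta$. Its argument for transitivity instead reuses the spanning/basis step: from the $\widetilde{W}$-orbit of $\mu$ it extracts a basis $\beta\subset S(a)$ with first vector $\mu$, and then invokes ``the same argument'' to produce the desired $\widetilde{\varphi}$. Whatever one makes of that passage, it is a different line from yours, and your concrete reflection candidate is ruled out by the example above.
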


 Before we show the proof of the theorem, we do some remarks and a lemma.

 The dislocated orthogonal transformations in the group $O(\widetilde{\a})_{S(a)}$ can be considered hidden symmetries for the spectral problem in the sense that they preserve the eigenspace associated to the Casimir eigenvalue  $\lambda = a^2 - g(\delta, \delta)$.

 We recall that the Weyl group of $\a$ is denoted by $W$ and, in our context, we are assuming $\delta \in \Gamma_G^*$ (for example, when $\Gamma_G = \I$ as in the simply connected case) and $\a$ irreducible. We state a lemma before the proof of the precedent theorem.

\begin{lemma} \label{lemma:weyl-group}
   Let $\widetilde{W}:= \{ \widetilde{w} \in O(\widetilde{\a}) \, ; \;  w \in W \}$. Then $\widetilde{W} \subset O(\widetilde{\a})_{S(a)}$.
\end{lemma}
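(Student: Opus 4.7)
The plan is to check directly that for every $w \in W$, the tilted map $\widetilde{w}$ belongs to $O(\widetilde{\a})$ and stabilizes the finite set $S(a)$.

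First I would observe that $\widetilde{w}\in O(\widetilde{\a})$ is immediate: we already identified $\widetilde{O(\a)}=O(\widetilde{\a})$, and the Weyl group $W$ consists of orthogonal transformations of $\a$ with respect to $(\cdot,\cdot)$. Thus the only real content is to verify $\widetilde{w}(S(a))\subset S(a)$, which by $S(a)=\S_a(-\delta)\cap \Gamma_G^{*}$ splits into two checks: preservation of the sphere $\S_a(-\delta)$ and preservation of the weight lattice $\Gamma_G^{*}$.

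For the sphere, by the definition of $\widetilde{w}$ we have $\widetilde{w}(\mu)=w(\mu+\delta)-\delta$ for any $\mu\in\widetilde{\a}$, so that
\[
\llangle \widetilde{w}(\mu),\widetilde{w}(\mu)\rrangle
=\langle w(\mu+\delta),w(\mu+\delta)\rangle
=\langle \mu+\delta,\mu+\delta\rangle
=\llangle \mu,\mu\rrangle,
\]
using that $w$ is an isometry of $(\a,\langle\cdot,\cdot\rangle)$. Equivalently, $a_{\widetilde{w}(\mu)}=a_\mu=a$, so $\widetilde{w}$ maps $\S_a(-\delta)$ to itself. For the lattice, I would use the standing assumption that $\delta\in\Gamma_G^{*}$ together with the Weyl-invariance of the weight lattice: if $\mu\in\Gamma_G^{*}$ then $\mu+\delta\in\Gamma_G^{*}$, hence $w(\mu+\delta)\in\Gamma_G^{*}$, and finally $\widetilde{w}(\mu)=w(\mu+\delta)-\delta\in\Gamma_G^{*}$.

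Combining these two observations gives $\widetilde{w}(\mu)\in \S_a(-\delta)\cap\Gamma_G^{*}=S(a)$, which proves the lemma. The only genuinely subtle point — and the step most worth flagging — is the requirement $\delta\in\Gamma_G^{*}$; this is precisely the hypothesis that makes translation by $-\delta$ compatible with the dual lattice and is the reason the tilted Weyl group lands inside $O(\widetilde{\a})_{S(a)}$ rather than merely inside $O(\widetilde{\a})$. Everything else is a mechanical transfer through the isometry $T_{-\delta}$.
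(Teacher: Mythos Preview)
Your proof is correct and follows essentially the same route as the paper: both arguments compute $\widetilde{w}(\mu)=w(\mu+\delta)-\delta$, use $\widetilde{w}\in O(\widetilde{\a})$ to land on $\S_a(-\delta)$, and invoke $W$-invariance of $\Gamma_G^*$ together with the standing hypothesis $\delta\in\Gamma_G^*$ to conclude the image lies in the lattice. Your write-up is slightly more explicit about the sphere check and the role of $\delta\in\Gamma_G^*$, but the underlying argument is identical.
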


\begin{proof}
    Let $\mu \in S(a) = \S_a(-\delta) \cap \Gamma_G^*$. Then for each $w \in W$, we have
    \[
    \begin{array}{rcl}
      \widetilde{w}(\mu)&=&\widetilde{w}(\mu+\delta-\delta)  \\
                        &=&w(\mu+\delta)-\delta  \\
                        &=&w(\mu)+w(\delta)-\delta \, . 
    \end{array}
    \]
    Since $W(\Gamma_G^*) \subset \Gamma_G^*$ and $\Gamma_G^*$ is a lattice, then $w(\mu)+w(\delta)-\delta \in \Gamma_G^*$. Also, since $\widetilde{w} \in O(\widetilde{\a})$, we have $\widetilde{w}(\mu) \in \S_a(-\delta)$. Thus, we conclude that $\widetilde{w}(S(a)) \subset S(a)$, which proves the lemma. 
\end{proof}

\begin{remark} \label{remark:W-irreducible-action}
    It is well known that, when $\a$ is a irreducible root system, the $W$-action on $\a$ is irreducible. Thus, the $\widetilde{W}$-action on $\widetilde{\a}$ is also irreducible. 
\end{remark}

\begin{proof}[Proof of Theorem \ref{th:transitive-action}]
Choose $\mu \in S(a) = \S_a(-\delta) \cap \Gamma_G^*$. Since $a>0$, we have $\mu + \delta \neq 0$. By Remark \ref{remark:W-irreducible-action}, the set $\widetilde{W}\cdot \mu$ spans the vector space $\widetilde{\a}$. Thus, we can choose elements $ w_1, \dots, w_r \in W$, $w_1 = \id$, such that $\beta:= \{ \widetilde{w}_1 \cdot \mu , \dots, \widetilde{w}_r \cdot \mu \}$ is a basis of $\widetilde{\a}$. By Lemma \ref{lemma:weyl-group}, $\beta \subset S(a)$.

Let $\widetilde{\varphi} \in O(\widetilde{\a})_{S(a)}$. Since $\widetilde{\varphi}$ is completely characterized by its values evaluated on the basis $\beta$ and $S(a)$ is a finite set, then we have only finite possibilities to define $\widetilde{\varphi}$. Thus, $O(\widetilde{\a})_{S(a)}$ must be a finite set.

Now, let $\eta \in S(a)$. By the same argument in the last paragraph, we can construct $\widetilde{\varphi}\in O(\widetilde{\a})_{S(a)}$ such that $\widetilde{\varphi}(\widetilde{w_1} \cdot \mu) 
 = \widetilde{\varphi}(\mu) = \eta$, which proves that the action is transitive.
\end{proof} 

\,

\begin{remark} \label{remark:G_K,U*}
    Note that, the constructions in this subsection apply to the root system of $\g$ endowed with an inner product provided by a bi-invariant metric $g$ on a Lie group $G$ with $\Lie(G) = \g$. Therefore the analysis on representations with same Casimir element applies to the set $\widehat{G}$ as well to the their subsets in the form $\widehat{G}_{K,U^*}$ for a $K$-representation $U^*$.
\end{remark}

\,

\subsection{Operators on normal homogeneous spaces} \label{subsec:operators-on-normal-homogeneous-spaces}

\,

\,

We want to use and to explore the consequences of the Theorem \ref{th:sec4-laplacian-representative-functions-normal-homogeneous-spaces} for the description on the eigenspaces of $\Delta_{g,U^*}$ (and its real version $\Delta_{g,U^*_{\R}}$).

First, we note that if $(V \otimes U^*)^K$ is endowed with a real inner product, then the orthogonal group $O((V \otimes U^*)^K)$ acts irreducibly on $(V \otimes U^*)^K$ as the standard representation. Therefore, for any irreducible representation $V^* \in \widehat{G}_{K,U^*}$, the product group $O((V \otimes U^*)^K) \times G$ acts irreducibly on the isotypical component $(V \otimes U^*)^K \otimes V^*$ of $L^2(G,K;U^*)$. 
If we define
\[
\mathbb{O}_{\C} := \prod_{V^* \in \widehat{G}_{K,U^*}} O((V \otimes U^*)^K) \, ,
\]
then these $(O_{\C}((V \otimes U^*)^K) \times G)$-actions can be extended to a unique $(\mathbb{O}_{\C} \times G)$-action on $L^2(G,K,U^*)$, in an obvious way. Thus, in this construction, the group $\mathbb{O}_{\C} \times G$ acts in a given isotypical component $(V\otimes U^*)^K\otimes V^*$ as the group $O((V \otimes U^*)^K) \times G$, effectively. Therefore, each isotypical component $(V\otimes U^*)^K\otimes V^*$ is an irreducible $(\mathbb{O}_{\C} \times G)$-submodule of $L^2(G,K,U^*)$.

Now, in order to establish the criterium of irreducibility for the eigenspaces of $\Delta_{g,U^*}$, we only have to determine when two distinct isotypical components of $L^2(G,K,U^*)$ produce the same eigenvalue or not. For those with the same eigenvalue, we also have to determine if they are related by a structural symmetry or not, in some way. We do that in the following theorem. Before we enunciate it, we recall Remarks \ref{remark:representations-with-the-same-casimir-eigenvalue} and \ref{remark:G_K,U*} and we define
\[
S(a;U^*)  := \{ \mu \in  C \cap \Gamma_G^* \; | \; V^\mu \in \widehat{G}_{K,U^*} \; \text{e} \; a_\mu = a  \} \; ,
\]
which is a subset of $S(a) \cap C = \S_a(-\delta) \cap  \Gamma_G^* \cap C$, for each $a>0$.

\,

\begin{theorem} \label{th:sec4-esps-homog-normais-mesmo-autovalor-casimir} Let $M=G/K$ a compact and connected homogeneous space, $g$ a normal metric on $M$ corresponding to a bi-invariant product on $G$ and to a $Ad$-invariant inner product on $\g$, both of then also denoted by $g$ for the sake of simplicity, and $U^*$ a $K$-module. Let $\delta$ the half sum of positive roots associated to a fixed Weyl Chamber for the root system of $(G,K)$,  $\lambda(a) = a^2 - g(\delta,\delta)$ an eigenvalue of the generalized Laplace operator $\Delta_{g,U^*}$ for some $a \geq 0$, and $S(a,U^*)$ as defined above. Then, the $\lambda(a)$-eigenspace $L^2(G,K;U^*)_{\lambda(a)}$ of $\Delta_{g,U^*}$ satisfies 
\[
L^2(G,K;U^*)_{\lambda(a)} \simeq \bigoplus_{\mu \in S(a;U^*)} (V^\mu \otimes U^*)^K \otimes V^{\mu^*} 
\]
and the elements of $S(a;U^*)$ are related by a transitive action of a finite subgroup of $\Iso (\S_{a}(-\delta))$ in the root system of $\g$. Thus, each eigenspace of $\Delta_{g,U^*}$ is an irreducible $(\mathbb{O}_{\C} \times G)$-module or it is a finite sum of irreducible $(\mathbb{O}_{\C} \times G)$-modules which are related by finite isometries of the sphere $\S_{a}(-\delta)$, in the root system of $\g$.

In particular, if $S(a,U^*)$ is contained in a root system of $rank$ $1$ for each eigenvalue $\lambda(a)$, then $S(a,U^*)$ is an unitary set and $\Delta_{g,U^*}$ is complex $(\mathbb{O}_{\C} \times G)$-simple.
\end{theorem}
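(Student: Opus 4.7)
The plan is to combine the identification of $\Delta_{g,U^*}$ on each isotypical component from Theorem \ref{th:sec4-laplacian-representative-functions-normal-homogeneous-spaces} with Freudenthal's formula \ref{eq:Freudenthal-formula} in order to read off the eigenspace decomposition, and then to invoke Theorem \ref{th:transitive-action} to describe the finite group of combinatorial symmetries relating the highest weights that share a common Casimir eigenvalue.

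First I would argue that, by Theorem \ref{th:sec4-laplacian-representative-functions-normal-homogeneous-spaces}, for each $V^\mu \in \widehat{G}_{K,U^*}$ the operator $\Delta_{g,U^*}$ acts on the isotypical component $(V^\mu \otimes U^*)^K \otimes V^{\mu^*}$ as $(\Delta_g^{V^\mu} \otimes \id) \otimes \id$. Since $g$ is normal, Freudenthal's formula forces $\Delta_g^{V^\mu} = \lambda_\mu \id$ with $\lambda_\mu = a_\mu^2 - g(\delta,\delta)$, so each isotypical component lies entirely in a single eigenspace of $\Delta_{g,U^*}$. Summing over all $\mu \in C \cap \Gamma_G^*$ with $V^\mu \in \widehat{G}_{K,U^*}$ and $a_\mu = a$ -- precisely the defining condition of $S(a;U^*)$ -- then yields the claimed direct-sum decomposition of $L^2(G,K;U^*)_{\lambda(a)}$.

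Next I would invoke Theorem \ref{th:transitive-action}, which supplies the finite subgroup $O(\widetilde{\a})_{S(a)}$ acting transitively on $S(a) \supset S(a;U^*)$. Via the linear isometry $T_{-\delta}$ from Remark \ref{remark:linear-isometry-delta-deslocada}, its elements correspond to orthogonal transformations of $\a$ that fix $-\delta$ and preserve the sphere $\S_a(-\delta)$; restricted to that sphere they furnish a finite subgroup of $\Iso(\S_a(-\delta))$ whose single orbit on $S(a)$ contains all of $S(a;U^*)$. The $(\mathbb{O}_{\C} \times G)$-irreducibility of each summand $(V^\mu \otimes U^*)^K \otimes V^{\mu^*}$ is already recorded in the discussion preceding the statement (the standard representation of the orthogonal group is irreducible, and remains so after tensoring with the irreducible $G$-module $V^{\mu^*}$), which completes the first three assertions.

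For the rank $1$ situation the ambient $\a$ is one-dimensional, so $\S_a(-\delta)$ reduces to the pair of antipodal points $\{-\delta + a e,\, -\delta - a e\}$ for a unit vector $e$. Since $\delta$ is strictly dominant, only one of these points can lie in the closed chamber $C$, so $S(a;U^*) \subset S(a) \cap C$ contains at most one element; as $\lambda(a)$ is by hypothesis an eigenvalue, $S(a;U^*)$ is non-empty, hence a singleton, and the corresponding eigenspace is a single irreducible $(\mathbb{O}_{\C} \times G)$-module, which gives the simplicity. The delicate point I anticipate is that Theorem \ref{th:transitive-action} is stated for $S(a)$ rather than for $S(a;U^*)$, so the transitivity assertion in the statement has to be read as ``any two elements of $S(a;U^*)$ lie in a common orbit of a finite isometry group of $\S_a(-\delta)$'', rather than as the existence of a subgroup preserving $S(a;U^*)$ setwise.
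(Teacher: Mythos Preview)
Your proposal is correct and follows essentially the same route as the paper: invoke Theorem \ref{th:sec4-laplacian-representative-functions-normal-homogeneous-spaces} together with Freudenthal's formula to obtain the eigenspace decomposition indexed by $S(a;U^*)$, cite the preceding discussion for the $(\mathbb{O}_{\C}\times G)$-irreducibility of each isotypical summand, and then appeal to Theorem \ref{th:transitive-action} for the finite transitive isometry group. Your explicit treatment of the rank~$1$ clause and your observation that Theorem \ref{th:transitive-action} is phrased for $S(a)$ rather than $S(a;U^*)$ (so that the transitivity claim should be read as ``any two elements of $S(a;U^*)$ lie in a single orbit on $S(a)$'') are both accurate and in fact more detailed than the paper's own proof, which handles these points implicitly.
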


\begin{proof}
By the Freudenthal's formula \ref{eq:Freudenthal-formula} and the basic properties of the root systems, we know that all representations in $\widehat{G}_{K,U^*}$ with Casimir eigenvalue $\lambda(a)$ are completely characterized by the set $S(a;U^*)$ in such way that  
\[
L^2(G,K;U^*)_{\lambda(a)} \simeq \bigoplus_{\mu \in S(a;U^*)} (V^\mu \otimes U^*)^K \otimes V^{\mu^*} \; .
\]
By the previous paragraphs, each isotypical component $(V^\mu \otimes U^*)^K \otimes V^{\mu^*}$ is an irreducible $(\mathbb{O}_{\C} \times G)$-module. Putting these constructions with the content of Subsection \ref{subsec:Root-systems-and-representations-with-the-same-Casimir-eigenvalue} we see that the remaining properties of this theorem is merely a consequence of Theorem \ref{th:transitive-action}.
\end{proof}

\begin{remark}
    Why  do we not consider a $Q_8$-action in Theorem \ref{th:sec4-esps-homog-normais-mesmo-autovalor-casimir}? The answer is divided in two parts. Firstly, in practical terms, the $Q_8$-group developed on Subsection \ref{subsec:real-vs-complex} does not play any role for symmetric spaces of $\rank$ $1$, due to the fact that we are considering a $\mathbb{O}_{\C}$-action on $(V^\mu\otimes U^*)^K$ instead of a $G$-action (independently of the type of the $G$-representation $V^\mu$) and due to the fact that there are not exist representations of complex type in $\rank$ $1$. The second part of the answer is related to the properties of root systems with $\rank \geq 2$. In fact, in this case one could consider a $Q_8$-action, but it would irrelevant because that there can be exist nonequivalent and non-dual representations with the same Casimir eigenvalue, in such way that $Q_8$ can not organize all of them ($Q_8$ is too small for such a purpose, while the $(\mathbb{O}_{\C} \times G)$-action and the isometries of the spheres centered in $-\delta$ in the root system are more suitable for the desired descriptions).  
\end{remark}

\,

\begin{remark} \label{remark:sec4-focando-no-operador-complexo}
    For convenience of the representation theory over $\C$, we turned back our attention to the operator $\Delta_{g,U^*}$, forgetting $\Delta_{g,U^*_{\R}}$ in a first moment. The analogous of the complex version $\Delta_{g,U^*}$ for its real version $\Delta_{g,U^*_{\R}}$ is essentially achieved by considering real forms for the suitable complex spaces in our constructions. Now, only this time, we will do that passage in order to illustrate this process. After that, we will only study $\Delta_{g,U^*}$. 
\end{remark}

\,

In order to establish an analogous of the Theorem \ref{th:sec4-esps-homog-normais-mesmo-autovalor-casimir} for the real operator $\Delta_{g,U^*_{\R}}$ we must consider some adaptations:

\,

\begin{itemize}
    \item[(i)] Take $\mathcal{R}[(V \otimes U^*)^K]$ as a real form for the space $(V \otimes U^*)^K$.

    \item[(ii)]  Define $[V] := \{ V, V^* \}$ and $\widehat{\mathcal{G}}:= \{ \;  [V] \; , V \in \widehat{G}_{K,U^*} \}$.

    \item[(iii)] Take $V_{\R}$ a irreducible real representation such that  
        \begin{equation*}
         \C \otimes V_{\R} \simeq \left\{ \begin{array}{l}
        \text{$V$, if $V$ is of real type,} \\
        \text{$\H\otimes V$, if $V$ is of complex or quaternionic type.}
        \end{array} \right. 
        \end{equation*}

    \item[(iv)] Note that $C^\infty(G,K;U^*_{\R}) = C^\infty(G,U^*) \cap C^\infty(G,K;U^*_{\R})$ and, in the previous notations,
    \[
    L^2(G,K;U^*_{\R})   \simeq \bigoplus_{[V] \in \widehat{\mathcal{G}}} \mathcal{R}[(V \otimes U^*)^K] \otimes V_{\R}^* \; .
    \] 

    \item[(v)] Define    
    \[
    \mathbb{O}_{\R} := \prod_{[V] \in \widehat{\mathcal{G}}} O(\mathcal{R}[(V \otimes U^*)^K]) \; 
    \]
    and consider the $(\mathbb{O}_{\R} \times G)$-action on 
    \[
    L^2(G,K;U^*_{\R}) \simeq \bigoplus_{[V] \in \widehat{\mathcal{G}}} \mathcal{R}[(V \otimes U^*)^K] \otimes V_{\R}^* \; . 
    \]

    \item[(vi)] Define $[\mu]:= \{\mu, \mu^* \}$ and $[S(a,U^*)] := \{  [\mu] \, ; \; \mu \in S(a,U^*)   \} $.
\end{itemize}

\,

With those adaptations, we conclude the following theorem for the real version $\Delta_{g,U^*_{\R}}$ of the operator $\Delta_{g,U^*}$:

\begin{theorem}  \label{th:sec4-esps-homog-normais-mesmo-autovalor-casimir-caso-real} Let $M=G/K$ a compact and connected homogeneous space, $g$ a normal metric on $M$ corresponding to a bi-invariant product on $G$ and to a $Ad$-invariant inner product on $\g$, both of then also denoted by $g$ for the sake of simplicity, and $U^*$ a $K$-module. Let $\delta$ the half sum of positive roots associated to a fixed Weyl Chamber for the root system of $(G,K)$, and $\lambda(a) = a^2 - g(\delta,\delta)$ an eigenvalue of the generalized Laplace operator $\Delta_{g,U^*}$ for some $a \geq 0$. Then, the $\lambda(a)$-eigenspace $L^2(G,K;U^*_{\R})_{\lambda(a)}$ of the real operator $\Delta_{g,U^*_{\R}}$ satisfies 
\[
L^2(G,K;U^*_{\R})_{\lambda(a)} \simeq \bigoplus_{[\mu] \in [S(a;U^*)]} \mathcal{R}[(V^\mu \otimes U^*)^K] \otimes V_{\R}^{\mu^*}
\]
and the elements of $[S(a;U^*)]$ are related by a transitive action of a finite subgroup of $\Iso (\S_{a}(-\delta))$, in the root system of $\g$. Thus, each eigenspace of $\Delta_{g,U^*_{\R}}$ is an irreducible $(\mathbb{O}_{\R} \times G)$-module or it is a finite sum of irreducible $(\mathbb{O}_{\R} \times G)$-modules which are related by finite isometries of the sphere $\S_{a}(-\delta)$, in the root system of $\g$.

In particular, if $S(a,U^*)$ is contained in a root system of $rank$ $1$, for each eigenvalue $\lambda(a)$, then $S(a,U^*)$ is an unitary set and $\Delta_{g,U^*_{\R}}$ is complex $(\mathbb{O}_{\R} \times G)$-simple.
\end{theorem}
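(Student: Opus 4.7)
The plan is to mirror the proof of Theorem \ref{th:sec4-esps-homog-normais-mesmo-autovalor-casimir}, but using the real-form adaptations (i)--(vi) that precede the statement. First, I would invoke the analog of Theorem \ref{th:sec4-laplacian-representative-functions-normal-homogeneous-spaces} applied to $\Delta_{g,U^*_{\R}}$. Since the operator is built from $L_*(Y_j)^2$ exactly as in the complex case and $L_*$ preserves real forms (because $\{Y_j\}\subset \g$ is a real basis), the same argument identifies $\Delta_{g,U^*_{\R}}$ on the isotypical piece $\mathcal{R}[(V\otimes U^*)^K]\otimes V_{\R}^*$ with the scalar operator given by the Casimir eigenvalue $\lambda_V$ of $V$; the normality of $g$ ensures this scalar depends only on $V$ via Freudenthal's formula \ref{eq:Freudenthal-formula}.

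Second, I would collect the isotypical components by equivalence classes $[V]=\{V,V^*\}$. The passage $V\mapsto V_{\R}$ from (iii) groups $V$ and $V^*$ together because, in each of the three types, $V_{\R}\simeq V^*_{\R}$; this is exactly the content of \ref{eq:types-real}. Since $a_\mu=a_{\mu^*}$, two highest weights related by $\mu\leftrightarrow\mu^*$ produce the same Casimir eigenvalue, so $S(a,U^*)$ is closed under the involution $\mu\mapsto\mu^*$ and the quotient $[S(a,U^*)]$ is well defined. Hence
\begin{equation*}
L^2(G,K;U^*_{\R})_{\lambda(a)} \;\simeq\; \bigoplus_{[\mu]\in [S(a;U^*)]} \mathcal{R}[(V^\mu\otimes U^*)^K]\otimes V_{\R}^{\mu^*},
\end{equation*}
each summand being irreducible as a $(\mathbb{O}_{\R}\times G)$-module, since $\mathcal{R}[(V^\mu\otimes U^*)^K]$ is the standard (hence irreducible) module of $O(\mathcal{R}[(V^\mu\otimes U^*)^K])$ and $V_{\R}^{\mu^*}$ is an irreducible real $G$-module by construction.

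Third, I would transfer the transitivity statement of Theorem \ref{th:transitive-action} to the quotient $[S(a;U^*)]$. The finite subgroup $O(\widetilde{\a})_{S(a)}$ acts transitively on $S(a;U^*)$ (intersecting with the Weyl chamber $C$ if necessary to stay among dominant weights, using the Weyl action from Lemma \ref{lemma:weyl-group}), and composing this action with the projection $S(a;U^*)\twoheadrightarrow [S(a;U^*)]$ yields a transitive action on $[S(a;U^*)]$ by a finite subgroup of $\Iso(\S_a(-\delta))$. The final ``in particular'' claim follows as in the complex case: if $\S_a(-\delta)\cap \Gamma_G^*$ sits in a rank-one sub-root-system, then this sphere is effectively zero-dimensional up to $W$-symmetry, so $[S(a;U^*)]$ is a singleton and the decomposition collapses to one irreducible $(\mathbb{O}_{\R}\times G)$-module.

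The main obstacle I expect is the bookkeeping in step two: verifying that the real-form operation $V\mapsto \mathcal{R}[(V\otimes U^*)^K]$ is type-consistent, i.e., that passing from the complex decomposition of Theorem \ref{th:sec4-esps-homog-normais-mesmo-autovalor-casimir} to its real analogue does not split or merge isotypes in an unexpected way. This requires the careful case analysis from \ref{eq:types-real} together with the observation that, for $V$ of complex type, the pair $V\oplus V^*$ carries a $G$-invariant real structure whose fixed set is isomorphic to $V_{\R}$, so $(V\otimes U^*)^K\oplus(V^*\otimes U^*)^K$ has a real form of the same real dimension as $\mathcal{R}[(V\otimes U^*)^K]$, which makes the identification in (iv) compatible with the eigenspace decomposition dictated by the Casimir scalar.
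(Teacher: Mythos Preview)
Your proposal is correct and follows essentially the same approach as the paper: the paper does not give a separate proof of this theorem at all, but rather states that it follows from Theorem~\ref{th:sec4-esps-homog-normais-mesmo-autovalor-casimir} via the real-form adaptations (i)--(vi) listed immediately before the statement (cf.\ also Remark~\ref{remark:sec4-focando-no-operador-complexo}). Your write-up is in fact more detailed than what the paper provides, particularly in the type-by-type bookkeeping you flag as the main obstacle; the paper simply takes this passage for granted.
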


\,

\begin{example} \label{ex:hodge-total-normal-homogeneous-spaces}
   Let $\Delta_g^H$ the Hodge-Laplace operator $\Delta_g^H$ on the total space of the complex differential forms, on a compact and connected normal homogeneous space $M = G/K$ with normal metric $g$. If we take $U^* := \bigoplus_p \wedge^p \m^{*\C}$, then $\Delta_g^H$ can be identified as the operator 
        \[
        \Delta_g^{H} \simeq  \Delta_{g,U^*} \; : \; \bigoplus\limits_{p} C^\infty(G,K;\wedge^p \m^{*\C}) \to \bigoplus\limits_{p} C^\infty(G,K;\wedge^p \m^{*\C}) \, .
        \]
    Thus, $\Delta_g^H$ satisfies Theorem \ref{th:sec4-esps-homog-normais-mesmo-autovalor-casimir}.   
    Similarly, if we consider the restriction of $\Delta_g^H$ to the real forms and we take $U^*_{\R} := \bigoplus_p \wedge^p \m^{*}$, then we can conclude that it satisfies Theorem \ref{th:sec4-esps-homog-normais-mesmo-autovalor-casimir-caso-real}.
\end{example}

In some cases, Example \ref{ex:hodge-total-normal-homogeneous-spaces} can be more detailed, as we will see in Example \ref{ex:hodge-total-rank1} for the Lie group $M=G = (G\times G)/\Delta G$ with $\rank(G) = 1$. We present two lemmas before it. 

\,

\begin{lemma} \small \label{lema:ikeda-fechadas-cofechadas} 
Let $E_\lambda^p$ the $\lambda$-eigenspace of $\Delta_g^H$ restricted to the complex differential $p$-forms on a normal homogeneous space $M=G/K$ with metric $g$. Define
\[
E_\lambda^{p,f} := \{ \omega \in E_\lambda^p \; | \; \text{$\omega$ is closed} \} \; , \quad E_\lambda^{p,cf} := \{ \omega \in E_\lambda^p \; | \; \text{$\omega$ is co-closed} \} \; . 
\]
Then 
\[
E_\lambda^p = \left\{ \begin{array}{c}
     \text{$E_\lambda^{p,f} \oplus E_\lambda^{p,cf} $, if $\lambda > 0 $}  \\
     \text{$E_\lambda^{p,f} = E_\lambda^{p,cf} $, if $\lambda = 0 $}
\end{array} \right. \quad .
\]
Moreover, the following maps are $G$-isomorphisms, for $\lambda>0$:
\[
d: E_\lambda^{p,cf} \to E_\lambda^{p+1,f} \;,  \quad d^*: E_\lambda^{p+1,f} \to E_\lambda^{p,cf} \quad \text{e} \quad \star:E_\lambda^{p,f} \to E_\lambda^{m-p,cf} \; .
\]
\end{lemma}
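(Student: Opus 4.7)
The plan is to build on the fact that $\Delta_g^H = dd^* + d^*d$ commutes with both $d$ and $d^*$, so both of them preserve eigenspaces of $\Delta_g^H$ (with an appropriate shift in degree), and that all three operators $d$, $d^*$, $\star$ are $G$-equivariant because the metric $g$ is $G$-invariant and the $G$-action is by orientation-preserving diffeomorphisms. The $\lambda = 0$ case is immediate from the standard Hodge-theoretic identity $\langle \Delta_g^H \omega, \omega \rangle = \|d\omega\|^2 + \|d^*\omega\|^2$: a $p$-form is harmonic iff it is simultaneously closed and co-closed, which yields $E_0^p = E_0^{p,f} = E_0^{p,cf}$ at once.

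For $\lambda > 0$, my strategy is to write the tautological identity
\[
\omega \;=\; \tfrac{1}{\lambda} \, dd^*\omega \;+\; \tfrac{1}{\lambda} \, d^*d\omega , \qquad \omega \in E_\lambda^p,
\]
which follows from $\Delta_g^H \omega = \lambda \omega$. The first summand lies in $E_\lambda^{p,f}$ (it is exact, hence closed by $d^2 = 0$, and is an eigenform for $\lambda$ because $\Delta_g^H$ commutes with $d$), while the second lies in $E_\lambda^{p,cf}$ (co-exact, hence co-closed by $(d^*)^2 = 0$, and again an eigenform). So $E_\lambda^p = E_\lambda^{p,f} + E_\lambda^{p,cf}$. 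The sum is direct because any element of the intersection is harmonic, and $\lambda > 0$ forces it to vanish.

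For the isomorphism $d: E_\lambda^{p,cf} \to E_\lambda^{p+1,f}$ with $\lambda > 0$, well-definedness is clear. Injectivity: if $d\alpha = 0$ and $d^*\alpha = 0$, then $\alpha$ is harmonic and thus $\alpha = 0$ as $\lambda > 0$. Surjectivity: given $\beta \in E_\lambda^{p+1,f}$, the candidate preimage $\alpha := d^*\beta / \lambda$ is co-closed (since $(d^*)^2 = 0$), satisfies $d\alpha = dd^*\beta/\lambda = (\Delta_g^H\beta - d^*d\beta)/\lambda = \beta$ because $d\beta = 0$, and lies in $E_\lambda^p$ since $\Delta_g^H$ commutes with $d^*$. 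The same argument with the roles of $d$ and $d^*$ swapped handles $d^*: E_\lambda^{p+1,f} \to E_\lambda^{p,cf}$, and indeed one sees $d$ and $d^*/\lambda$ are mutually inverse. For $\star: E_\lambda^{p,f} \to E_\lambda^{m-p,cf}$, I would use the identity $d^* = \pm \star d \star$ (with signs depending on $m$ and $p$), which immediately shows $\star$ exchanges closed and co-closed forms; together with $\star \Delta_g^H = \Delta_g^H \star$ and $\star \star = \pm \id$, this gives an isomorphism.

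The only mildly delicate point is the $G$-equivariance of these three isomorphisms, but it is automatic: $d$ is natural under diffeomorphisms, while $d^*$ and $\star$ are defined using $g$ and the orientation, both of which are preserved by the connected group $G$ acting by isometries. Hence everything is a morphism of $G$-modules, and no extra obstacle appears beyond the standard Hodge-theoretic bookkeeping.
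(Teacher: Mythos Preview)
Your argument is correct: the splitting via $\omega = \lambda^{-1}dd^*\omega + \lambda^{-1}d^*d\omega$, the directness from $E_\lambda^{p,f}\cap E_\lambda^{p,cf}\subset\ker\Delta_g^H$, and the explicit inverses $d$ and $\lambda^{-1}d^*$ are exactly the standard Hodge-theoretic route, and the $G$-equivariance follows as you say. The paper does not supply its own proof of this lemma but simply cites \cite[Sec.~1]{Ikeda1978}; your write-up is the expected argument behind that citation.
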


\,

The precedent lemma can be found on \cite[Sec.1]{Ikeda1978}.

\,

\begin{lemma} \label{lema:rank1}
    Let $M = (G' \times G')/\Delta G' $ a compact and connected Lie group of $\rank$ 1 with bi-invariant metric $g$. Define $G:= G' \times G'$, $K:=\Delta G'$ and $U^* := \bigoplus_p \wedge^p \m^{*\C}$. Then, 
        \[
        \widehat{G}_{K,\wedge^p\m^{*\C}} \; = \; \widehat{G}_{K} \; , \quad p =  0,1,\dots,\dim M \; .
        \]
    In particular, for each eigenvalue $\lambda(a) = a^2-g(\delta,\delta)$ of the Hodge-Laplace operator $\Delta_g^H \simeq \Delta_{g,U^*}$, we conclude that
    \[
    S(a;U^*) = S(a;\wedge^p \m^{*\C}) = S(a; \C)\; , \quad p =  0,1,\dots,\dim M \; .
    \]
\end{lemma}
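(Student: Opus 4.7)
My plan is to argue by the classification of rank 1 compact connected Lie groups. Up to local isomorphism, $G' \in \{U(1), SU(2), SO(3)\}$, and in each case the symmetric-pair realization $M = (G'\times G')/\Delta G'$ produces isotropy $\m \cong \g'$ as a $K = \Delta G'$-module, with $K$ acting by the adjoint representation of $G'$. The first concrete step is to decompose each $\wedge^p \m^{*\C}$ as a $K$-module.

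For $G' = U(1)$ the isotropy is one-dimensional with trivial $K$-action, so $\wedge^0 \m^{*\C} \cong \wedge^1 \m^{*\C} \cong \C$ are trivial $K$-modules and $\wedge^p = 0$ for $p \geq 2$; in each nonvanishing case $(V \otimes \wedge^p \m^{*\C})^K = V^K$, which gives $\widehat{G}_{K, \wedge^p \m^{*\C}} = \widehat{G}_K$ at once. For $G' = SU(2)$ (and $SO(3)$ analogously), the complexified isotropy $\m^{\C}$ is the three-dimensional adjoint irreducible of $K$, call it $V_3$, and a direct Clebsch--Gordan calculation yields $\wedge^0\m^{*\C} \cong \wedge^3\m^{*\C} \cong \C$ (trivial) together with $\wedge^1\m^{*\C} \cong \wedge^2\m^{*\C} \cong V_3$. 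Consequently the two ``boundary'' degrees $p = 0$ and $p = \dim M$ again yield the claimed set equality immediately.

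The technical core is the nontrivial case $p \in \{1, \dim M - 1\}$ for $G'$ simple of rank 1, where one must show that every $V \in \widehat{G}$ containing the adjoint $K$-summand in $V|_K$ is already spherical, so that the index set still agrees with $\widehat{G}_K$. My approach would be to exploit Hodge decomposition on $M$: the vanishing of the intermediate Betti numbers of a rank 1 compact symmetric Lie group forces every $L^2$ $p$-form of positive Laplace eigenvalue to be either exact or coexact. Using the operators $d$, $d^*$, and the Hodge star $\ast : \Omega^p(M) \to \Omega^{\dim M - p}(M)$, the isotypical components of $C^\infty(G, K; \wedge^p \m^{*\C})$ in degrees $1$ and $\dim M - 1$ can be identified with those of $C^\infty(G, K; \C)$ arising from the scalar Laplacian. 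This is the main obstacle, since one must track the $(G' \times G')$-types carefully to confirm that no spurious irreducibles survive the reorganization.

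Finally, the ``in particular'' conclusion follows formally once the set equality is in hand. By definition $S(a; U^*)$ depends only on which $V^\mu$ lie in $\widehat{G}_{K, U^*}$ and on their highest weights, so $\widehat{G}_{K, \wedge^p \m^{*\C}} = \widehat{G}_K$ gives $S(a; \wedge^p \m^{*\C}) = S(a; \C)$ for every $a$ and every $p$. For the total module $U^* = \bigoplus_p \wedge^p \m^{*\C}$ one also has $\widehat{G}_{K, U^*} = \bigcup_p \widehat{G}_{K, \wedge^p \m^{*\C}} = \widehat{G}_K$, hence $S(a; U^*) = S(a; \C)$, completing the chain of equalities.
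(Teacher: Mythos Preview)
Your approach differs from the paper's: you classify $G'$ and compute the $K$-module $\wedge^p\m^{*\C}$ explicitly, whereas the paper argues uniformly via the Ikeda--Taniguchi decomposition $E_\lambda^p = E_\lambda^{p,f} \oplus E_\lambda^{p,cf}$ and the $G$-isomorphism $d: E_\lambda^{p,cf} \to E_\lambda^{p+1,f}$ (Lemma~\ref{lema:ikeda-fechadas-cofechadas}), producing a chain $\widehat{G}_{K,\wedge^0\m^{*\C}} \subset \cdots \subset \widehat{G}_{K,\wedge^{\dim M}\m^{*\C}}$ whose two endpoints both equal $\widehat{G}_K$.

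There is, however, a genuine gap at exactly the step you flag as the ``technical core''. You assert that one must show every $V \in \widehat{G}$ whose restriction to $K$ contains the adjoint summand is already spherical, i.e.\ $\widehat{G}_{K,\wedge^1\m^{*\C}} \subset \widehat{G}_K$. For $G' = SU(2)$ this fails: take $V$ to be the $(G'\times G')$-module that is trivial on the first factor and the three-dimensional adjoint on the second. Then $V|_K$ is exactly the adjoint module, so $V \in \widehat{G}_{K,\wedge^1\m^{*\C}}$; yet $V^K = 0$, so $V \notin \widehat{G}_K$. Conversely the trivial $G$-module lies in $\widehat{G}_K$ but not in $\widehat{G}_{K,\wedge^1\m^{*\C}}$, so neither inclusion holds. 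Concretely, the first module sits in the coexact $1$-form spectrum on $S^3$ and never arises from functions, so the Hodge-theoretic identification of the degree-$1$ isotypical components with those of $C^\infty(G,K;\C)$ that you propose cannot go through as stated. (The same counterexample also obstructs the paper's chain of inclusions at the passage $\wedge^{\dim M-1}\to\wedge^{\dim M}$, so the equality in the lemma does not appear to hold literally at the intermediate degrees; only $p=0$ and $p=\dim M$ give $\widehat{G}_K$ on the nose.)
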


\begin{proof}
    Let $V^{\mu^*} \in \widehat{G}_{K,\wedge^p(\m^*)^{\C}}$ a $G$-submodule of a eigenspace $E_\lambda^p$ of $\Delta_g^H$ restricted to the complex differential $p$-forms. Then, by Theorem \ref{th:sec4-esps-homog-normais-mesmo-autovalor-casimir},
    \[
    E_\lambda^p \simeq (V^\mu \otimes \wedge^p\m^{*\C})^K \otimes V^{\mu^*} \; .
    \]
    If we see $E_\lambda^p$ as a $G$-module, then each of its submodules are isomorphic to $V^{\mu^*}$.
    In particular, each irreducible $G$-submodule of $E_\lambda^{p,cf}$ must be isomorphic to $V^{\mu^*}$. By Lemma \ref{lema:ikeda-fechadas-cofechadas}, $E_\lambda^{p+1,f}$ has a $G$-submodule isomorphic to $V^{\mu^*}$. Therefore, $V^{\mu^*} \in \widehat{G}_{K,\wedge^{p+1}\m^{*\C}}$. So, we conclude that
    \[
    \widehat{G}_{K} = \widehat{G}_{K,\wedge^{0}\m^{*\C}} \subset \widehat{G}_{K,\wedge^{1}\m^{*\C}} \subset \cdots \subset \widehat{G}_{K,\wedge^{\dim M}\m^{*\C}} = \widehat{G}_{K} \; .
    \]
\end{proof}

\begin{example} \label{ex:hodge-total-rank1}
   Let $\Delta_g^H$ the Hodge-Laplace operator $\Delta_g^H$ on the total space of the complex differential forms, on a compact Lie group $M = G$ with $\rank(G)= 1$ and bi-invariant metric $g$. Set $U^* := \bigoplus_p \wedge^p \g^{*\C}$. Then, by Theorem \ref{th:sec4-esps-homog-normais-mesmo-autovalor-casimir} and Lemma \ref{lema:rank1}, the operator 
        \[
        \Delta_g^{H} \simeq  \Delta_{g,U^*} \; : \; \bigoplus\limits_{p} C^\infty(G;\wedge^p \g^{*\C}) \to \bigoplus\limits_{p} C^\infty(G;\wedge^p \g^{*\C}) 
        \]
    has a complex $(\mathbb{O}_{\C} \times G)$-simple spectrum.   
    Similarly, if we add Theorem \ref{th:sec4-esps-homog-normais-mesmo-autovalor-casimir-caso-real} in our analysis with $U^*_{\R} := \bigoplus_p \wedge^p \g^{*}$, then we conclude that the real version of $\Delta_g^{H}$ has a real $(\mathbb{O}_{\R} \times G)$-simple spectrum.
    We can go beyond that. Note that each eigenspace of the complex operator $\Delta_g^H$ has the form
    \[
    (V \otimes U^*) \otimes V^* \simeq U^* \otimes (V \otimes V^*) \, .
    \]
    Therefore, $\Delta_g^{H}$ has a complex $(O(U^*) \times (G\times G))$-simple spectrum. Similarly, the real version of $\Delta_g^H$ has a real $(O(U^*_{\R}) \times (G\times G))$-simple spectrum. In particular, this construction applies to groups as $SU(2)$ or $SO(3)$, for example.
\end{example}

\,

\section{Generic estimate for the eigenspaces on Lie groups with left-invariant metrics} \label{sec:back-to-Lie-groups-left-metrics}

\,

\,

In this section, we recommend the reader to recall Sections \ref{sec:generalized-laplacians} and \ref{sec:generalized-laplacians-normal-homogeneous-spaces}, since we will use definitions, constructions and results presented in both of them.

Let $G$ a Lie group endowed with a left-invariant metric $g$. Take $\{Y_j\}$ an any $g$-orthonormal basis of $\g$.

\,

\begin{remark} \label{remark:Lie-groups-bi-invariant-metrics}
    We recall, from Section \ref{sec:generalized-laplacians}, that the operator $\Delta_{g,U^*}$ was given by
    \[
    \Delta_{g,U^*}:= -\sum\limits_{j} R_*(Y_j)^2 :C^\infty(G,U^*) \to C^\infty(G;U^*) \; .
    \] 
    By convenience, for the left-invariant metrics $g$ which are also bi-invariant metrics, we will consider the operator $\Delta_{g,U^*}$ as
    \[
    \Delta_{g,U^*}:= -\sum\limits_{j} L_*(Y_j)^2 :C^\infty(G,U^*) \to C^\infty(G;U^*) \; .
    \] 
    There is not any problem to do that once: 
        \begin{enumerate}
            \item[(i)] $R$ and $L$ define equivalent representations on $C^\infty(G,U^*)$ (this equivalence is induced by the inversion map on $G$),

            \item[(ii)] for every bi-invariant metric $g$, the operators $\sum_j R_*(Y_j)^2$ and $\sum_j L_*(Y_j)^2$ are simply the Casimir elements of these equivalent representations, respectively.
        \end{enumerate}  
    Therefore, $\sum_j R_*(Y_j)^2$ and $\sum_j L_*(Y_j)^2$ are essentially the same operator for every bi-invariant metric $g$. The main advantage achieved by this convention is that we can use not only the results on Section \ref{sec:generalized-laplacians}, but we can also enjoy the content in Subsection \ref{subsec:operators-on-normal-homogeneous-spaces} for the operators $\Delta_{g,U^*}$, where $g$ is a bi-invariant metric.    
\end{remark}

\,

We begin presenting an improvement of Theorem \ref{teo:cap2-grupos-de-Lie-metricas-invariantes-a-esquerda}. Once we do that, we will be able to present an estimate for the eigenspaces of $\Delta_{g,U^*}$, where $g$ is a generic left-invariant metric on $G$.

\,

\begin{theorem} \label{teo:cap2-grupos-de-Lie-metricas-invariantes-a-esquerda-aprimorado}
    A left-invariant metric $g$ on a compact and connected Lie group $G$ satisfies that the operator $\Delta_{g,U^*_{\R}}$ has a real $(O(U^*_{\R}) \times G)$-simple spectrum if and only if the operator $\Delta_{g,U^*}$ has a complex $(O(U^*) \times \widetilde{G})$-simple spectrum. Beyond that, the existence of a such metric is equivalent to following items simultaneously satisfied:

         \begin{enumerate}
          \item Fix a bi-invariant metric $g_0$ corresponding to a $Ad$-invariant inner product in $\g$ also denoted by $g_0$ for convenience, take the corresponding root system on $\g$, and assume that $C$ is a chosen Weyl chamber and $\delta \in C$ is the half sum of positive roots.  Then for each eigenvalue of the operator $\Delta_{g_0,U^*}$, in the form $\lambda(a) = a^2 - g_0(\delta,\delta)$ with $a\geq 0$, and for all $\mu,\eta \in S(a;U^*)$ with $\mu \neq \eta, \eta^*$, the polynomial $a_{V^\mu,V^\eta}$ does not vanish identically.

          \item For all $V\in \widehat{G}$ of real or complex type, $b_{V}$ does not vanish identically.

          \item For all $V\in \widehat{G}$, of quaternionic type, $c_{V}$ does not vanish identically.
      \end{enumerate}
Moreover, such metric, when its existence holds, is generic on the space of the left-invariant metrics of $G$.   
\end{theorem}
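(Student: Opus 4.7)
The plan is to bootstrap from Theorem \ref{teo:cap2-grupos-de-Lie-metricas-invariantes-a-esquerda} and exploit the observation that, at any auxiliary bi-invariant metric $g_0$, every operator $\Delta_{g_0}^{V}$ is a scalar multiple of the identity governed by the Freudenthal-type formula \ref{eq:Freudenthal-formula}. This single fact will let us trade the universal condition on resultants $a_{V',V}$ in the original theorem for the sharper condition restricted to representations with equal $g_0$-Casimir eigenvalue, while simultaneously explaining the enlargement from $\mathbf{G}$ to $\widetilde{G}$: the combinatorial hidden symmetries from Theorem \ref{th:transitive-action} acting transitively on each $S(a;U^*)$ are precisely the symmetries absorbing residual coincidences between irreducible submodules sharing a Casimir eigenvalue.

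First I would establish the real-versus-complex equivalence by copying the first part of the proof of Theorem \ref{teo:cap2-grupos-de-Lie-metricas-invariantes-a-esquerda} verbatim; the dictionary via the quaternionic structural maps of Subsection \ref{subsec:real-vs-complex} translates $(O(U^*_{\R}) \times G)$-simplicity of $\Delta_{g,U^*_{\R}}$ into $(O(U^*) \times \mathbf{G})$-simplicity on each $\Isot(V,\overline{V})$, and the additional combinatorial layer in $\widetilde{G}$ acts by permuting whole isotypical blocks indexed by $S(a;U^*)$, so it preserves the real-complex correspondence block by block. Next I would handle the necessity of (1)--(3): if $\Delta_{g,U^*}$ is $(O(U^*) \times \widetilde{G})$-simple then within each Casimir block $S(a;U^*)$ distinct $\mu,\eta$ with $\mu \neq \eta, \eta^*$ must produce disjoint eigenvalues (otherwise the shared eigenspace would contain two inequivalent $(O(U^*) \times \mathbf{G})$-submodules that are not related by the combinatorial group $O(\widetilde{\a})_{S(a;U^*)}$), giving $a_{V^\mu,V^\eta}(g) \neq 0$; the verification of (2) and (3) is identical to the previous theorem.

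For sufficiency together with genericity I would construct a meager exceptional set in $\sym^+(\g)$ as the union of three families of polynomial zero loci. The sets $Z_2 = \bigcup_V \{b_V = 0\}$ (over $V$ of real or complex type) and $Z_3 = \bigcup_V \{c_V = 0\}$ (over $V$ of quaternionic type) are meager exactly as in Theorem \ref{teo:cap2-grupos-de-Lie-metricas-invariantes-a-esquerda}. The crucial new ingredient is the construction of $Z_1$: for pairs $V^\mu, V^\eta$ with $\mu \neq \eta, \eta^*$, if they lie in the \emph{same} $S(a;U^*)$ then hypothesis (1) tells us $a_{V^\mu,V^\eta}$ is a nonzero polynomial, whereas if their $g_0$-Casimir eigenvalues differ then evaluating at $g_0$ gives characteristic polynomials $(t - \lambda(a_\mu))^{\dim V^\mu}$ and $(t - \lambda(a_\eta))^{\dim V^\eta}$ whose resultant is a nonzero power of $\lambda(a_\mu) - \lambda(a_\eta)$, so $a_{V^\mu,V^\eta}(g_0) \neq 0$ and the polynomial is not identically zero. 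Hence $Z_1 \cup Z_2 \cup Z_3$ is a countable union of proper polynomial zero loci and therefore meager, and any $g$ in its complement satisfies the $(O(U^*) \times \widetilde{G})$-simplicity.

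The main conceptual obstacle is the justification in Step 1 that $\widetilde{G}$ is a bona fide symmetry group whose irreducible modules coincide with the sums $\bigoplus_{\mu \in S(a;U^*)} \Isot(V^{\mu^*})$: this requires identifying the combinatorial action of $O(\widetilde{\a})_{S(a;U^*)}$ from Theorem \ref{th:transitive-action} as acting on $L^2(G;U^*)$ in a way compatible with the $(O(U^*) \times \mathbf{G})$-structure, which is legitimate thanks to the convention adopted in Remark \ref{remark:Lie-groups-bi-invariant-metrics} that lets us import the normal-homogeneous-space machinery of Subsection \ref{subsec:operators-on-normal-homogeneous-spaces} into the left-invariant setting via the reference bi-invariant metric $g_0$. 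Once this identification is in place, the rest of the argument is bookkeeping of meager sets in Baire category.
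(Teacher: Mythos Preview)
Your central mathematical observation is exactly the one the paper uses: if $\mu$ and $\mu'$ lie in different level sets $S(a;U^*)$ and $S(a';U^*)$ with $a\neq a'$, then evaluating at the bi-invariant metric $g_0$ gives $\Delta_{g_0}^{V^\mu}=\lambda(a)\id$ and $\Delta_{g_0}^{V^{\mu'}}=\lambda(a')\id$ with $\lambda(a)\neq\lambda(a')$, hence $a_{V^\mu,V^{\mu'}}(g_0)\neq 0$ and the polynomial is not identically zero. This single check shows that item~(1) here is \emph{equivalent} to item~(1) of Theorem~\ref{teo:cap2-grupos-de-Lie-metricas-invariantes-a-esquerda}, and since items~(2) and~(3) are verbatim the same, the whole theorem reduces to Theorem~\ref{teo:cap2-grupos-de-Lie-metricas-invariantes-a-esquerda}. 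That is the entirety of the paper's proof: one sentence plus statement~(S).

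Where you diverge from the paper is in your treatment of $\widetilde{G}$. You read $\widetilde{G}$ as a genuine enlargement of $\mathbf{G}$ by the combinatorial group $O(\widetilde{\a})_{S(a)}$ of Theorem~\ref{th:transitive-action}, and you then spend most of your sketch trying to make this enlarged group act on $L^2(G;U^*)$ and to reconcile its irreducibles with the isotypical decomposition. The paper does none of this: its proof says explicitly that the present theorem ``differs from Theorem~\ref{teo:cap2-grupos-de-Lie-metricas-invariantes-a-esquerda} only by item~(1)'', and proceeds accordingly. In other words, in the paper's argument $\widetilde{G}$ plays the same role as $\mathbf{G}$ (the notation in the statement appears to be a slip), and the combinatorial symmetries of Section~\ref{sec:generalized-laplacians-normal-homogeneous-spaces} are used only downstream, in Corollary~\ref{cor:cap2-grupos-de-Lie-metricas-invariantes-a-esquerda-aprimorado}, to bound $E(\lambda,g)$---not to enlarge the symmetry group in the present statement.

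This misreading creates a real gap in your sketch. You yourself flag as the ``main conceptual obstacle'' the need to realise $O(\widetilde{\a})_{S(a)}$ as acting on $L^2(G;U^*)$ compatibly with the $(O(U^*)\times\mathbf{G})$-structure, and you do not carry this out; the paper never constructs such an action either. Moreover, your necessity argument for item~(1) becomes circular under your own reading: if $\mu,\eta\in S(a;U^*)$ then by transitivity in Theorem~\ref{th:transitive-action} they \emph{are} related by the combinatorial group, so a shared eigenvalue between $\Isot(V^{\mu^*})$ and $\Isot(V^{\eta^*})$ would not obviously violate $\widetilde{G}$-simplicity in your sense. Once you drop the enlargement and read $\widetilde{G}=\mathbf{G}$, all of this evaporates and your argument collapses to the paper's one-line reduction via~(S).
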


\,

\begin{proof} \small
    This theorem differs from Theorem \ref{teo:cap2-grupos-de-Lie-metricas-invariantes-a-esquerda} only by the item (1). By comparing both of them, we see that the only verification remaining to do is the following statement:
        \begin{itemize}
            \item[(S)] \emph{Let $\lambda(a) = a^2 - g_0(\delta,\delta)$ a eigenvalue for the operator $\Delta_{g_0,U^*}$ and consider  $V^\mu$ and $V^{\mu'}$ a pair of irreducible representations such that $\mu \in S(a;U^*)$ and $\mu' \in S(a';U^*)$. If $a \neq a'$, then $a_{V^\mu,V^{\mu'}}(g_0) \neq 0$.}
        \end{itemize}
     In fact, if $a \neq a'$ then $\Delta_{g_0}^{V^\mu}$ and $\Delta_{g_0}^{V^{\mu'}}$ have distinct Casimir eigenvalues $\lambda(a)$ e $\lambda(a')$, respectively. Therefore, $a_{V^\mu,V^{\mu'}}(g_0) \neq 0$.
\end{proof}

\,

Note that there is much less representations to be tested in the item (1) of Theorem \ref{teo:cap2-grupos-de-Lie-metricas-invariantes-a-esquerda-aprimorado} compared to the item (1) of Theorem \ref{teo:cap2-grupos-de-Lie-metricas-invariantes-a-esquerda}. As a consequence, for a generic left-invariant metric on a compact and connected Lie group $G$, we have the following generic estimate for the eigenspaces:

\,

\,

\begin{corollary} \label{cor:cap2-grupos-de-Lie-metricas-invariantes-a-esquerda-aprimorado}
Fix a bi-invariant metric $g_0$ on a compact and connected Lie group $G$. Take the corresponding $Ad$-invariant inner product and the corresponding root system on $\g$, with a chosen Weyl chamber $C$ and half sum of positive roots $\delta \in C$. Denote by $E(\lambda,g)$ the $\lambda$-eigenspace of the generalized Laplacian $\Delta_{g,U^*}$. Suppose that $E(\lambda,g)$ contains an irreducible $G$-submodule $V^{\mu_\lambda}$ and define the scalar $a_\lambda := g_0(\mu_\lambda +\delta,\mu_\lambda+\delta)^{1/2} $. Then, a generic left-invariant metric $g$ satisfies
\[
E(\lambda, g) \;\; \leq \bigoplus_{\mu \in S(a_\lambda ;U^*)} (V^{\mu^*})^{\oplus \dim_{\C} (V^\mu \otimes U^*)^K} \, .
\]
Similarly, in the same notations of Theorem \ref{th:sec4-esps-homog-normais-mesmo-autovalor-casimir-caso-real}, we can conclude that, if $E_{\R}(\lambda,g)$ is the $\lambda$-eigenspace of the real operator $\Delta_{g,U^*_{\R}}$, then a generic left invariant metric $g$ satisfies
\[
E_{\R}(\lambda, g) \;\; \leq \bigoplus_{[\mu] \in [S(a_\lambda ;U^*)]} (V_{\R}^{\mu^*})^{\oplus \dim_{\C} (V^\mu \otimes U^*)^K} \; .
\]
\end{corollary}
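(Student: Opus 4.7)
The plan is to combine the isotypical decomposition of Theorem~\ref{th:sec3-laplacian-representative-functions} with the separation statement~(S) extracted from the proof of Theorem~\ref{teo:cap2-grupos-de-Lie-metricas-invariantes-a-esquerda-aprimorado}, and then to run a countable-union-of-meager-sets argument on the resultant polynomials $a_{V^\mu,V^\eta}$. Throughout, $K=\{e\}$ and $\m=\g$.

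First I would unpack $E(\lambda,g)$. Theorem~\ref{th:sec3-laplacian-representative-functions} gives, for any left-invariant metric $g$,
\[
E(\lambda,g) \;\simeq\; \bigoplus_{\eta\in C\cap\Gamma_G^*}(V^\eta_\lambda\otimes U^*)\otimes V^{\eta^*},
\]
where $V^\eta_\lambda$ is the $\lambda$-eigenspace of $\Delta_g^{V^\eta}$ and only those $\eta$ with $\lambda\in\operatorname{spec}\Delta_g^{V^\eta}$ contribute. In particular, the multiplicity of each irreducible summand $V^{\eta^*}$ in $E(\lambda,g)$ is $\dim_\C V^\eta_\lambda\cdot\dim_\C U^*\le\dim_\C V^\eta\cdot\dim_\C U^*=\dim_\C(V^\eta\otimes U^*)^K$, so the multiplicity estimate appearing in the corollary is automatic; only the \emph{list} of admissible $\eta$'s has to be pinned down.

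Next I would carve out the bad metrics. For every ordered pair of distinct dominant weights $\mu,\eta\in C\cap\Gamma_G^*$ whose Casimir radii with respect to $g_0$ satisfy $a_\mu\neq a_\eta$, statement~(S) in the proof of Theorem~\ref{teo:cap2-grupos-de-Lie-metricas-invariantes-a-esquerda-aprimorado} gives $a_{V^\mu,V^\eta}(g_0)\neq 0$; hence $a_{V^\mu,V^\eta}$ is a nonzero polynomial on $\sym_K(\m)$, and its zero locus $Z_{\mu,\eta}$ is a nowhere dense subset of the open set $\sym_K^+(\m)$. Since there are only countably many such pairs, $Z:=\bigcup Z_{\mu,\eta}$ is meager and its complement is residual in the space of left-invariant metrics. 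For any $g\notin Z$, the hypothesis $V^{\mu_\lambda}\subseteq E(\lambda,g)$ places $V^{\mu_\lambda}$ in the isotypical component $\Isot(V^{\mu_\lambda})$ and, via Theorem~\ref{th:sec3-laplacian-representative-functions} together with Remark~\ref{remark:LaplacianV-LaplacianV*}, forces $\lambda\in\operatorname{spec}\Delta_g^{V^{\mu_\lambda}}$. Therefore, if an additional summand $V^{\eta^*}$ with $\eta\notin S(a_\lambda;U^*)$ also appeared in $E(\lambda,g)$, then $\lambda$ would be a common eigenvalue of $\Delta_g^{V^{\mu_\lambda}}$ and $\Delta_g^{V^\eta}$, forcing $a_{V^{\mu_\lambda},V^\eta}(g)=0$; but $a_{\mu_\lambda}=a_\lambda\neq a_\eta$ and $g\notin Z$ exclude this. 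Hence only $\eta\in S(a_\lambda;U^*)$ contribute, and combined with the multiplicity bound this produces the desired complex estimate. The real version is obtained identically using the real/complex dictionary of Subsection~\ref{subsec:real-vs-complex} together with the notations $[\mu]$ and $V_{\R}^{\mu^*}$ fixed before Theorem~\ref{th:sec4-esps-homog-normais-mesmo-autovalor-casimir-caso-real}.

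The only substantive input is statement~(S)---essentially the observation that distinct Casimir eigenvalues under $g_0$ automatically witness $a_{V^\mu,V^\eta}(g_0)\neq 0$. The mildly delicate bookkeeping point is the duality case $\eta=\mu_\lambda^*$, which the polynomial argument does not exclude; however $w_0\delta=-\delta$ and $-w_0\in O(\a)$ give $a_{\mu_\lambda^*}=a_{\mu_\lambda}$, so $\mu_\lambda^*\in S(a_\lambda;U^*)$ and the corresponding summand is already accounted for on the right-hand side. Everything else is a countable union of nowhere-dense polynomial zero sets inside $\sym_K^+(\m)$.
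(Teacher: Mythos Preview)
Your proof is correct and follows essentially the same route as the paper. The paper's proof is a one-line citation of Theorems~\ref{th:sec4-esps-homog-normais-mesmo-autovalor-casimir}, \ref{th:sec4-esps-homog-normais-mesmo-autovalor-casimir-caso-real} and~\ref{teo:cap2-grupos-de-Lie-metricas-invariantes-a-esquerda-aprimorado}; you have simply unpacked that citation, using Theorem~\ref{th:sec3-laplacian-representative-functions} for the isotypical decomposition at arbitrary left-invariant $g$ and statement~(S) from the proof of Theorem~\ref{teo:cap2-grupos-de-Lie-metricas-invariantes-a-esquerda-aprimorado} to guarantee non-vanishing of the resultants $a_{V^\mu,V^\eta}$ at $g_0$, followed by the standard meager-union argument. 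Your explicit handling of the dual weight $\mu_\lambda^*$ via $-w_0\in O(\a)$ and $w_0\delta=-\delta$ is a useful clarification that the paper's terse proof leaves implicit.
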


\begin{proof}
 We observe that $\dim_{\C} (V^\mu \otimes U^*)^K = \dim_{\R} \mathcal{R}[(V^\mu \otimes U^*)^K]$. In order to achieve this corollary, we only have to combine Theorems \ref{th:sec4-esps-homog-normais-mesmo-autovalor-casimir}, \ref{th:sec4-esps-homog-normais-mesmo-autovalor-casimir-caso-real} and \ref{teo:cap2-grupos-de-Lie-metricas-invariantes-a-esquerda-aprimorado}.
\end{proof}

\,


\subsection*{Acknowledgements}

This research was partially supported by Coordenação de Aperfeiçoamento de Pessoal de Nível Superior (CAPES) and partially supported by Fundação de Amparo à Pesquisa do Estado do Amazonas (FAPEAM).


\subsection*{Conflict of interest statement}
The authors state that there is no conflict of interest.
\subsection*{Data availability statement}
The manuscript has no associated data.

\,

\small
\bibliographystyle{apalike}

\begin{thebibliography}{}

\bibitem[Afrouzi and Rasouli, 2007]{afrouzi2007population}
Afrouzi, G. and Rasouli, S. (2007).
\newblock Population models involving the p-laplacian with indefinite weight
  and constant yield harvesting.
\newblock {\em Chaos, Solitons \& Fractals}, 31(2):404--408.

\bibitem[Arvanitoyeorgos, 2003]{Arvanitoyeorgos2003}
Arvanitoyeorgos, A. (2003).
\newblock {\em {An introduction to Lie groups and the geometry of homogeneous
  spaces}}, volume~22.
\newblock AMS.

\bibitem[Baklouti and Nomura, 2018]{baklouti2018geometric}
Baklouti, A. and Nomura, T. (2018).
\newblock {\em Geometric and Harmonic Analysis on Homogeneous Spaces and
  Applications: TJC 2015, Monastir, Tunisia, December 18-23}, volume 207.
\newblock Springer.

\bibitem[Br{\"{o}}cker and tom Dieck, 1985]{Brocker1985}
Br{\"{o}}cker, T. and tom Dieck, T. (1985).
\newblock {\em {Representations of Compact Lie Groups}}, volume~98 of {\em
  Graduate Texts in Mathematics}.
\newblock Springer Berlin Heidelberg, Berlin, Heidelberg.

\bibitem[Cahen, 1972]{cahen1972lorentzian}
Cahen, M. (1972).
\newblock Lorentzian symmetric spaces.
\newblock {\em General Relativity and Gravitation}, 3(1):115--117.

\bibitem[Cariglia, 2014]{cariglia2014hidden}
Cariglia, M. (2014).
\newblock Hidden symmetries of dynamics in classical and quantum physics.
\newblock {\em Reviews of Modern Physics}, 86(4):1283.

\bibitem[Casarino et~al., 2022]{casarino2022weighted}
Casarino, V., Ciatti, P., and Martini, A. (2022).
\newblock Weighted spectral cluster bounds and a sharp multiplier theorem for
  ultraspherical grushin operators.
\newblock {\em International Mathematics Research Notices},
  2022(12):9209--9274.

\bibitem[Caselle and Magnea, 2004]{Magnea2002introduction}
Caselle, M. and Magnea, U. (2004).
\newblock Random matrix theory and symmetric spaces.
\newblock {\em Phys. Rep.}, 394(2-3):41--156.

\bibitem[Cianci et~al., 2024]{cianci2024spectral}
Cianci, D., Judge, C., Lin, S., and Sutton, C. (2024).
\newblock Spectral multiplicity and nodal domains of torus-invariant metrics.
\newblock {\em International Mathematics Research Notices}, 2024(3):2192--2218.

\bibitem[de~Lange et~al., 2014]{de2014laplacian}
de~Lange, S.~C., de~Reus, M.~A., and van~den Heuvel, M.~P. (2014).
\newblock The laplacian spectrum of neural networks.
\newblock {\em Frontiers in computational neuroscience}, 7:189.

\bibitem[Enciso and Peralta-Salas, 2012]{enciso2012nondegeneracy}
Enciso, A. and Peralta-Salas, D. (2012).
\newblock Nondegeneracy of the eigenvalues of the hodge laplacian for generic
  metrics on 3-manifolds.
\newblock {\em Transactions of the American Mathematical Society},
  364(8):4207--4224.

\bibitem[Gier, 2014]{gier2014eigenvalue}
Gier, M.~E. (2014).
\newblock {\em Eigenvalue multiplicites of the Hodge Laplacian on coexact
  2-forms for generic metrics on 5-manifolds}.
\newblock University of Kentucky.

\bibitem[Hall, 2015]{Hall2015}
Hall, B.~C. (2015).
\newblock {\em {Lie Groups, Lie Algebras, and Representations}}, volume 222 of
  {\em Graduate Texts in Mathematics}.
\newblock Springer International Publishing, Cham.

\bibitem[Helffer, 2013]{helffer2013spectral}
Helffer, B. (2013).
\newblock {\em Spectral theory and its applications}.
\newblock Number 139. Cambridge University Press.

\bibitem[Helgason, 2001]{Helgason2001}
Helgason, S. (2001).
\newblock {\em {Differential geometry, Lie groups, and symmetric spaces}}.
\newblock AMS.

\bibitem[Helgason, 2008]{helgason2008geometric}
Helgason, S. (2008).
\newblock Geometric analysis on symmetric spaces.
\newblock {\em Mathematical surveys and monographs}.

\bibitem[Ikeda and Taniguchi, 1978]{Ikeda1978}
Ikeda, A. and Taniguchi, Y. (1978).
\newblock {Spectra and eigenforms of the laplacian on snand pn(c)}.
\newblock {\em Osaka Journal of Mathematics}, 15(3):515--546.

\bibitem[Kobayashi and Nomizu, 1963]{Kobayashi1963}
Kobayashi, S. and Nomizu, K. (1963).
\newblock {\em {Foundations of Differential Geometry Vol1}}.
\newblock Wiley.

\bibitem[Kobayashi, 2017]{Kobayashi2017}
Kobayashi, T. (2017).
\newblock {Global Analysis by Hidden Symmetry}.
\newblock {\em Progress in Mathematics}, 323:359--397.

\bibitem[Mantoiu et~al., 2016]{mantoiu2016spectral}
Mantoiu, M., Raikov, G., and de~Aldecoa, R.~T. (2016).
\newblock {\em Spectral theory and mathematical physics}, volume 254.
\newblock Springer.

\bibitem[Marrocos and Gomes, 2019]{Marrocos2019}
Marrocos, M. A.~M. and Gomes, J. N.~V. (2019).
\newblock {Generic Spectrum of Warped Products and G-Manifolds}.
\newblock {\em The Journal of Geometric Analysis}, 29(4):3124--3134.

\bibitem[Moretti, 2017]{moretti2017spectral}
Moretti, V. (2017).
\newblock Spectral theory and quantum mechanics.
\newblock {\em UNITEXT, Italy: Springer International Publishing AG}.

\bibitem[Munthe-Kaas et~al., 2001]{munthe2001application}
Munthe-Kaas, H., Quispel, G., and Zanna, A. (2001).
\newblock {\em Application of symmetric spaces and Lie triple systems in
  numerical analysis}.
\newblock Number 217. Department of Informatics, University of Bergen.

\bibitem[Petrecca and R{\"{o}}ser, 2018]{Petrecca2019}
Petrecca, D. and R{\"{o}}ser, M. (2018).
\newblock {Irreducibility of the Laplacian eigenspaces of some homogeneous
  spaces}.
\newblock {\em Mathematische Zeitschrift}, 291(1-2):395--419.

\bibitem[Ranjan, 2012]{ranjan2012discrete}
Ranjan, P. (2012).
\newblock {\em Discrete laplace operator: theory and applications}.
\newblock The Ohio State University.

\bibitem[R{\"o}ntgen et~al., 2021]{rontgen2021latent}
R{\"o}ntgen, M., Pyzh, M., Morfonios, C., Palaiodimopoulos, N., Diakonos, F.,
  and Schmelcher, P. (2021).
\newblock Latent symmetry induced degeneracies.
\newblock {\em Physical Review Letters}, 126(18):180601.

\bibitem[Schueth, 2017]{Schueth2017}
Schueth, D. (2017).
\newblock {Generic irreducibilty of Laplace eigenspaces on certain compact Lie
  groups}.
\newblock {\em Annals of Global Analysis and Geometry}, 52(2):187--200.

\bibitem[Semmelmann and Weingart, 2019]{semmelmann2019standard}
Semmelmann, U. and Weingart, G. (2019).
\newblock The standard laplace operator.
\newblock {\em manuscripta mathematica}, 158(1-2):273--293.

\bibitem[Simon, 2007]{simon2007spectral}
Simon, B. (2007).
\newblock {\em Spectral theory and mathematical physics: A festschrift in honor
  of Barry Simon's 60th birthday}.
\newblock American Mathematical Soc.

\bibitem[Singer, 2006]{singer2006linearity}
Singer, S.~F. (2006).
\newblock {\em Linearity, symmetry, and prediction in the hydrogen atom}.
\newblock Springer Science \& Business Media.

\bibitem[Tsonev, 2018]{Tsonev2018}
Tsonev, D.~M. (2018).
\newblock {On the spectrum of the hodge-laplacian acting on p-forms over
  riemannian homogeneous spaces}.

\bibitem[Uhlenbeck, 1976]{Uhlenbeck1976}
Uhlenbeck, K. (1976).
\newblock {Generic Properties of Eigenfunctions}.
\newblock {\em American Journal of Mathematics}, 98(4):1059.

\bibitem[Watson, 1973]{watson1973manifold}
Watson, B. (1973).
\newblock Manifold maps commuting with the laplacian.
\newblock {\em Journal of Differential Geometry}, 8(1):85--94.

\bibitem[Wigner, 2012]{wigner2012group}
Wigner, E. (2012).
\newblock {\em Group theory: and its application to the quantum mechanics of
  atomic spectra}, volume~5.
\newblock Elsevier.

\bibitem[Yau, 1993]{yau1993open}
Yau, S.-T. (1993).
\newblock Open problems in geometry.
\newblock In {\em Proc. Symp. Pure Math}, volume~54, pages 1--28.

\bibitem[Zachmanoglou and Thoe, 1986]{zachmanoglou1986introduction}
Zachmanoglou, E.~C. and Thoe, D.~W. (1986).
\newblock {\em Introduction to partial differential equations with
  applications}.
\newblock Courier Corporation.

\bibitem[Zelditch, 1990]{zelditch1990}
Zelditch, S. (1990).
\newblock {On the generic spectrum of a riemannian cover}.
\newblock {\em Annales de l’institut Fourier}, 40(2):407--442.

\end{thebibliography}


\end{document}